\newlength{\defbaselineskip}
\theoremstyle{plain}
\newtheorem{defin}{Definition}[section]
\newtheorem{theorem}[defin]{Theorem}
\newtheorem{prop}[defin]{Proposition}
\newtheorem{lemma}[defin]{Lemma}
\newtheorem{conj}[defin]{Conjecture}
\newtheorem{remark}[defin]{Remark}
\numberwithin{equation}{section}
\def\dis{\displaystyle}
\def\io{\int_{\Omega}} 
\def\supp{\text{\text{supp}}}
\DeclareMathOperator{\R}{\mathbb{R}}
\DeclareMathOperator{\N}{\mathbb{N}}
\newcommand{\car}[1]{\raise1pt\hbox{$\chi$}_{#1}}
\definecolor{sap}{RGB}{120,36,51}
\def\vp{\varphi}
\def\RN{\mathbb{R}^{N}}
\def\D{\nabla}
\def\linf{L^{\infty}(\Omega)}
\def\luno{L^{1}(\Omega)}
\def\lp'n{(L^{p'}(\Omega))^{N}}
\def\R{\mathbb{R}}
\def\N{\mathbb{N}}
\def\elle#1{L^{#1}(\Omega)}
\def\vp{\varphi}
\def\w{W_0^{1,2}(\Omega)}
\def\car#1{\chi_{_{{#1}}}}
\def\norma#1#2{\|#1\|_{\lower 4pt \hbox{$ \scriptstyle #2$ }}}
\def\un{u_{n}}
\def\um{u_{m}}
\def\vn{v_{n}}
\def\zn{z_{n}}
\newcommand{\LL}{\>\hbox{\vrule width.2pt \vbox to7pt{\vfill \hrule width7pt height.2pt}}\>}
\author[R. Durastanti]{Riccardo Durastanti 
\\Dipartimento di Scienze di Base e Applicate per l' Ingegneria, 
\\ ``Sapienza" Universit\`a di Roma, Via Scarpa 16, 00161 Roma, Italy
\\riccardo.durastanti@sbai.uniroma1.it}
\keywords{Semilinear elliptic equations, Quasilinear elliptic equations, Singular elliptic equations, Singular natural growth gradient terms, Asymptotic behavior} 
\subjclass[2010]{35B40, 35J25, 35J61, 35J62, 35J75}
\begin{document}

\title{Asymptotic behavior and existence of solutions for singular elliptic equations}

\maketitle

\begin{abstract}
We study the asymptotic behavior, as $\gamma$ tends to infinity, of solutions for the homogeneous Dirichlet problem associated to singular semilinear elliptic equations whose model is 
$$
-\Delta u=\frac{f(x)}{u^\gamma}\,\text{ in }\Omega,
$$
where $\Omega$ is an open, bounded subset of $\RN$ and $f$ is a bounded function. We deal with the existence of a limit equation under two different assumptions on $f$: either strictly positive on every compactly contained subset of $\Omega$ or only nonnegative. Through this study we deduce optimal existence results of positive solutions for the homogeneous Dirichlet problem associated to
$$
-\Delta v + \frac{|\nabla v|^2}{v} = f\,\text{ in }\Omega.
$$
\vskip 0.5\baselineskip
\end{abstract}

\tableofcontents

\section{Introduction}
\label{s1}

In recent years, existence, uniqueness and regularity of nonnegative solutions of the following semilinear singular problem have been widely studied:
\begin{equation}\label{pbintro}
\begin{cases}
\dis
-\Delta u = \frac{f}{u^{\gamma}} & \mbox{in $\Omega$,} \\
\hfill u = 0 \hfill & \mbox{on $\partial\Omega$.}
\end{cases}
\end{equation}
Here $\Omega$ is an open bounded subset of $\R^{N}$, with $N > 2$, $f$ is a nonnegative function belonging to some Lebesgue space and $\gamma > 0$. \\
Existence and uniqueness of a classical solution $u\in C^2(\Omega)\cap C(\overline{\Omega})$ of \eqref{pbintro} are proved in \cite{STU,CRT}, when $f$ is a positive H\"older continuous function in $\overline{\Omega}$ and $\Omega$ is a smooth domain. In the same framework, Lazer and McKenna in \cite{LAMC} prove that $u\in\w$ if and only if $\gamma<3$ and that, if $\gamma>1$, the solution does not belong to $C^1(\overline{\Omega})$, while in \cite{DELP}, under the weaker assumption that $f$ is only nonnegative and bounded, Del Pino proves existence and uniqueness of a positive distributional solution belonging to $C^1(\Omega)\cap C(\overline{\Omega})$. These results are generalized by Lair and Shaker in \cite{LASH}. \\
Existence of a positive distributional solution with data merely in $L^1(\Omega)$ is proved by Boccardo and Orsina in \cite{bo}. The authors show that this solution, if $\gamma<1$, belongs to an homogeneous Sobolev space larger than $\w$, if $\gamma=1$, it belongs to $\w$ and, finally, if $\gamma>1$, it belongs to $W^{1,2}_{loc}(\Omega)$ (see Theorem \ref{daBO} below). In the last case the boundary condition is assumed in a weaker sense, i.e. $\dis u^{\frac{\gamma+1}{2}}\in\w$. \\
Existence and regularity of solutions of \eqref{pbintro} with data in suitable Lebesgue space or with measure data are also studied in \cite{COCO,BGH1,BCT,ORPE,GMM2,CHDE}, while, in case of a nonlinear principal part, we refer to \cite{BGH2,DDO,OLIV}. We underline also the study of qualitative properties of solutions of \eqref{pbintro} contained in \cite{CMS,EFS}. \\
As concerns uniqueness of solutions of \eqref{pbintro} the literature is more limited. If a solution belongs to $\w$, uniqueness is proved in \cite{boca}, while in \cite{SZ} a necessary and sufficient condition in order to have $\w$ solutions is shown. Moreover we can find uniqueness results of solutions out of finite energy space in \cite{CASC,GMM1,OLPE}. \\

We observe that if we perform in \eqref{pbintro} the change of variable
$$
v = \frac{u^{\gamma+1}}{\gamma+1}\,,
$$
we formally transform \eqref{pbintro} into the quasilinear singular equation with singular and gradient quadric lower order term
\begin{equation}
\label{ipar}
\begin{cases}
\dis -\Delta v + \frac{\gamma}{\gamma+1}\,\frac{|\nabla v|^{2}}{v} = f & \mbox{in $\Omega$,} \\
\hfill v = 0 \hfill & \mbox{on $\partial\Omega$.}
\end{cases}
\end{equation}
Equation \eqref{ipar} is a particular case of the quasilinear singular equation
\begin{equation}\label{ipar2}
\begin{cases}
\dis -\Delta v + B\,\frac{|\nabla v|^{2}}{v^{\rho}} = f & \mbox{in $\Omega$,} \\
\hfill v = 0 \hfill & \mbox{on $\partial\Omega$,}
\end{cases}
\end{equation}
where $B$ and $\rho$ are positive real numbers. \\
One usually says that the quadratic growth in $\nabla v$ of \eqref{ipar2} is {\sl natural} as this growth is invariant under the simple change of variable $w=F(v)$, where $F$ is a smooth function. In this case the equation \eqref{ipar2} is also {\sl singular} since the lower order term is singular where the solution is zero.\\
 
Problem \eqref{ipar2} has been recently studied by several authors. Existence of classical solutions is studied by Porru and Vitolo in \cite{POVI}, while existence of a positive solution $v\in\w$ when $f$ is bounded and {\sl strictly positive} on every compactly contained subset of $\Omega$ and $0<\rho\leq 1$ is contained in \cite{AMA,ABMA}. Moreover, if $0<\rho<1$, Boccardo proves in \cite{B} existence of a positive weak solution under weaker assumptions on $f$, that is $f$ only {\sl nonnegative} and belonging to $\dis L^{\left(\frac{2^*}{\rho}\right)'}(\Omega)$. \\
As concerns the case $\rho\geq 1$, existence of positive weak solutions is proved in \cite{B,M-A} for $B < 1$ if $\rho = 1$ and $\dis f\in L^{\frac{2N}{N+2}}(\Omega)$ is {\sl nonnegative} in $\Omega$, while in \cite{ACLMOP} existence is proved for every $B > 0$ and for every $\rho < 2$ if the datum $f\in L^{\frac{2N}{N+2}}(\Omega)$ is {\sl strictly positive} on every compactly contained subset of $\Omega$. Moreover existence of positive solutions in the same framework of \cite{ACLMOP}, under a weaker assumption on $f$, that is $f$ {\sl strictly positive} on every compactly contained subset of a neighborhood of $\partial \Omega$, is proved in \cite{cmar}. Nonexistence results for positive solutions in $\w$ of \eqref{ipar2} are given, if $\rho >2$, in \cite{ACLMOP,ZWQ}. \\
The study of the uniqueness of weak solutions of \eqref{ipar2} is more limited in literature. We refer to \cite{ARSE} where uniqueness is proved if $0<\rho\leq 1$ and to \cite{CL} for $\rho\geq 1$. We underline also the multiplicity result of weak solutions contained in \cite{ZHOU}. \\
Without the aim to be exhaustive we also refer the reader to \cite{CR,GPS} in which the existence of solutions of \eqref{ipar2} is studied also in presence of sign-changing data, while we refer to \cite{BOP,DOP,XIYA} for the study of \eqref{ipar2} in the parabolic case. \\

Looking at the results for \eqref{ipar2}, the case $B = 1$ and $\rho = 1$ is a borderline case, requiring a stronger assumption on the datum in order to prove existence of positive weak solutions. In this paper we give an answer to the question whether this stronger assumption is really necessary, or if it is only technical. \\
From now onwards, we mean by $f$ {\sl strictly positive} a function $f$ strictly positive on every compactly contained subset of $\Omega$, that is for every subset $\omega$ compactly contained in $\Omega$ there exists a positive constant $c_\omega$ such that $f\geq c_\omega>0$ almost everywhere in $\omega$. \\
Since the case $B = 1$ and $\rho = 1$ can be seen as the limit case as $\gamma$ tends to infinity of equation \eqref{ipar}, and since this equation is connected to equation \eqref{pbintro}, one can try to study problem \eqref{ipar2}, in the borderline case $B = 1$ and $\rho = 1$, by looking at the asymptotic behavior, as $\gamma$ tends to infinity, of the solutions of \eqref{pbintro} under the assumption that $f$ is either {\sl nonnegative} or {\sl strictly positive}. \\

In this paper we prove, if $f$ is {\sl strictly positive} in $\Omega$, letting $\gamma$ tend to infinity, that there is no limit equation to \eqref{pbintro} and we find a positive solution to
\begin{equation}
\label{ipar3}
\begin{cases}
\dis -\Delta v + \frac{|\nabla v|^{2}}{v} = f & \mbox{in $\Omega$,} \\
\hfill v = 0 \hfill & \mbox{on $\partial\Omega$\,,}
\end{cases}
\end{equation}
recovering the existence result contained in \cite{AMA,ABMA,ACLMOP}. \\ 
If we assume $f$ only {\sl nonnegative}, more precisely zero in a neighborhood of $\partial \Omega$, we prove that there is a limit equation to \eqref{pbintro} and we give a one-dimensional example providing that the assumption $f$ {\sl strictly positive} cannot be relaxed in order to have a positive solution to \eqref{ipar3} as a limit of approximations. \\

Our results imply that the existence results contained in \cite{AMA,M-A,ABMA,ACLMOP,cmar} are sharp. \\

The plan of the paper is the following: in Section \ref{s2} we give the definitions of solution to our problems and we state the results that will be proved in the paper. In Section \ref{s3} we prove a priori estimates for the solutions of \eqref{pbintro} both from above and from below, that allow us to pass to the limit in \eqref{pbintro} and \eqref{ipar} as $\gamma$ tends to infinity. In Section \ref{s4} we pass to the limit in \eqref{pbintro} under the two different assumptions on $f$. In Section \ref{s5} we pass to the limit in \eqref{ipar}, in the case $f$ strictly positive, obtaining the existence of positive solutions of \eqref{ipar3}. In Section \ref{s6} we show, if $f$ is only nonnegative, the one-dimensional example of nonexistence of positive solutions to \eqref{ipar3} obtained by approximation. To conclude, in Section \ref{s7} we present some open problems. \\

\subsection*{Notations} 

Let $\Omega$ be an open and bounded subset of $\R^N$, with $N\geq 1$. We denote by $\partial \Omega$ its boundary, by $|A|$ the Lebesgue measure of a Lebesgue measurable subset $A$ of $\R^N$, and we define $\dis \mathrm{diam}(\Omega)=\sup\{|x-y| \,:\, x,y\in\Omega\}$.\\
By $C_c(\Omega)$ we mean the space of continuous functions with compact support in $\Omega$ and by $C_0(\Omega)$ the space of continuous functions in $\Omega$ that are zero on $\partial\Omega$. Analogously, if $k\geq 1$, $C^k_c(\Omega)$ (resp. $C^k_0(\Omega)$) is the space of $C^k$ functions with compact support in $\Omega$ (resp. $C^k$ functions that are zero on $\partial\Omega$). \\
If no otherwise specified, we will denote by $C$ several constants whose value may change from line to line. These values will only depend on the data (for instance $C$ may depend on $\Omega$, $N$) but they will never depend on the indexes of the sequences we will introduce. \\
Moreover, for any $q>1$, $q'$ will be the H\"older conjugate exponent of $q$, while for any $1\leq p<N$, $p^*=\frac{Np}{N-p}$ will be the Sobolev conjugate exponent of $p$. We will also denote by $\epsilon(n)$ any quantity such that
$$\limsup_{n\rightarrow\infty}\epsilon(n)=0\,.$$
We will use the following well-known functions defined for a fixed $k>0$
$$T_k(s)=\max (-k,\min (s,k)) \quad \text{and}\quad G_k(s)=(|s|-k)^+ \operatorname{sign}(s),$$
with $s\in\R$. \\
We also mention the definition of the Gamma function
\begin{equation}
\label{gammadef}
\Gamma(z)\,=\,\int_0^{+\infty} t^{z-1} \mathrm{e}^{-t}\,dt\,,
\end{equation}
where $z$ is a complex number with positive real part, recalling that $\Gamma(1)=1$ and $\dis \Gamma\left(\frac{1}{2}\right)=\sqrt{\pi}$. \\
Finally we define $\phi_\lambda:\R\to\R$, with $\lambda >0$, the following function
\begin{equation}
\label{defphilam}
\phi_\lambda(s)\,=\,s\,\mathrm{e}^{\lambda s^2}\,.
\end{equation}
In what follows we will use that for every $a,b>0$ we have, if $\dis \lambda > \frac{b^2}{4a^2}$, that
\begin{equation}
\label{defphilam2}
a\,\phi'_\lambda(s)-b\,|\phi_\lambda(s)|\,\geq\,\frac{a}{2}\,.
\end{equation}
 
\section{Main assumptions and statement of the results}
\label{s2}

Let $M(x)$ be a matrix which satisfies, for some positive constants $0<\alpha\leq \beta$, for almost every $x\in\Omega$ and for every $\xi \in \R^{N}$ the following assumptions:
\begin{equation}
\label{albe}
M(x)\,\xi\cdot\xi \geq \alpha |\xi|^{2} \qquad \text{ and } \qquad |M(x)| \leq \beta\,.
\end{equation}
Let $\gamma>0$ be a real number. We consider the following semilinear elliptic problem with a singular nonlinearity 
\begin{equation}
\label{pbo}
\begin{cases}
\dis -{\rm div}(M(x)\nabla u) = \frac{f}{u^{\gamma}} & \mbox{in $\Omega$,} \\
\hfil u>0  & \mbox{in $\Omega$,} \\
\hfil u = 0  & \mbox{on $\partial\Omega$.}
\end{cases}
\end{equation}
To deal with existence for solutions to problem \eqref{pbo} we give the following definition of distributional solution contained in \cite{bo}.
\begin{defin}
A function $u$ in $W^{1,1}_{loc}(\Omega)$ such that 
\begin{equation*}
\begin{cases}
u\in W^{1,1}_0(\Omega) & \text{if $\gamma < 1$},\\
u^{\frac{\gamma+1}{2}} \in W^{1,1}_0(\Omega) & \text{if $\gamma \ge 1$},
\end{cases}
\end{equation*}
is a distributional solution of \eqref{pbo} if the following conditions are satisfied:
$$
\forall \omega\subset\subset \Omega\,\,\,\exists\,c_{\omega,\gamma}\,:\,u\,\geq \,c_{\omega,\gamma}\,>\,0\,\text{ in }\omega\,,
$$
and
$$
\io M(x)\nabla u\cdot\nabla\varphi\,=\,\io \frac{f\,\varphi}{u^\gamma}\,,\quad\forall\,\varphi\in C^1_c(\Omega)\,.
$$
\end{defin}
We underline that, if $\gamma>1$, the condition $\dis u^{\frac{\gamma+1}{2}} \in W^{1,1}_0(\Omega)$ gives meaning to the boundary condition of \eqref{pbo}. \\
We start studying  the asymptotic behavior of the sequence $\{\un\}$ of solutions to problem \eqref{pbo}, with $\gamma=n$. Our results are the following:
\begin{theorem}\label{main1}\sl
Let $f$ be a nonnegative $\elle\infty$ function. Suppose that there exists $\omega \subset\subset \Omega$ such that $f = 0$ in $\Omega \setminus \omega$, and such that for every $\omega' \subset \subset \omega$ there exists $c_{\omega'} > 0$ such that $f \geq c_{\omega'}$ in $\omega'$. Let $\{\un\}$ be a sequence of distributional solutions of
\begin{equation}\label{pbn}
\begin{cases}
\dis
-{\rm div}(M(x)\nabla \un) = \frac{f(x)}{\un^{n}} & \mbox{in $\Omega$,} \\
\hfill \un = 0 \hfill & \mbox{on $\partial\Omega$.}
\end{cases}
\end{equation}
Then $\{\un\}$ is bounded in $\elle\infty$, so that it converges, up to subsequences, to a bounded function $u$ which is identically equal to 1 almost everywhere in $\omega$. Furthermore, the sequence $\{f(x)/\un^{n}\}$ is bounded in $\elle1$, and if $\mu$ is the $*$-weak limit in the sense of measures of $f(x)/\un^{n}$, $\mu$ is concentrated on $\partial\omega$, and $u$ in $W^{1,2}_0(\Omega)$ is the solution of
\begin{equation}\label{pblim0}
\begin{cases}
-{\rm div}(M(x)\nabla u) = \mu & \mbox{in $\Omega$,} \\
\hfill u = 0 \hfill & \mbox{on $\partial\Omega$.}
\end{cases}
\end{equation}
\end{theorem}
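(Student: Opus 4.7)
The plan is to combine sharp a priori estimates for $\{u_n\}$ with a localisation argument that uses the support of $f$ to identify the limit.

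First, I would derive two a priori bounds. Testing \eqref{pbn} with $G_1(u_n)=(u_n-1)^+$ (admissible by density since $u_n\in W^{1,2}_{\mathrm{loc}}(\Omega)$) and using that $(u_n-1)/u_n^n\leq 1/(n-1)$ on $\{u_n>1\}$ yields
$$
\alpha\int_\Omega|\nabla G_1(u_n)|^2\leq \frac{\|f\|_{L^1(\Omega)}}{n-1},
$$
so $(u_n-1)^+\to 0$ strongly in $W^{1,2}_0(\Omega)$. This immediately gives the uniform $L^\infty$ bound on $u_n$ and, along an a.e.\ subsequence, $u\leq 1$ a.e.\ in $\Omega$. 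For the $L^1$ bound on the source, I pick $\psi\in C^1_c(\Omega)$ with $\psi\equiv 1$ in a neighbourhood of $\overline\omega$; since $f\equiv 0$ outside $\omega$ and $u_n$ is $M$-harmonic and uniformly bounded on a neighbourhood of $\mathrm{supp}\,\nabla\psi\subset\Omega\setminus\overline\omega$, a standard Caccioppoli estimate gives
$$
\int_\Omega\frac{f}{u_n^n}=\int_\Omega\frac{f\psi}{u_n^n}=\int_\Omega M(x)\nabla u_n\cdot\nabla\psi\leq C.
$$

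Next I would identify $u$ on $\omega$ and the structure of $\mu$. A contradiction argument gives $\liminf u_n\geq 1$ a.e.\ on $\omega$: if $|\{u_n<1-\varepsilon\}\cap\omega'|\geq \delta>0$ for infinitely many $n$ with $\omega'\subset\subset\omega$, the lower bound on $f$ in $\omega'$ yields $\int_\Omega f/u_n^n\geq c_{\omega'}\,\delta\,(1-\varepsilon)^{-n}\to+\infty$, contradicting the $L^1$ bound. Combined with $u\leq 1$, this forces $u\equiv 1$ a.e.\ on $\omega$. Up to a further subsequence, $f/u_n^n\rightharpoonup\mu$ in the $*$-weak sense of measures. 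To prove that $\mu$ is concentrated on $\partial\omega$ I would first establish an interior Caccioppoli estimate inside $\omega$: testing \eqref{pbn} with $(u_n-1)\eta^2$ for $\eta\in C^1_c(\omega)$ and using $|u_n-1|\leq C$ together with the $L^1$ bound on $f/u_n^n$ yields $\nabla u_n$ uniformly bounded in $L^2_{\mathrm{loc}}(\omega)$. Combined with $u_n\to 1$ strongly in $L^2_{\mathrm{loc}}(\omega)$, this forces $\nabla u_n\rightharpoonup 0$ weakly in $L^2_{\mathrm{loc}}(\omega)$. For $\omega'\subset\subset\omega''\subset\subset\omega$ and a cutoff $\tilde\psi\in C^1_c(\omega'')$ with $\tilde\psi\equiv 1$ on $\omega'$, testing gives
$$
\int_{\omega'}\frac{f}{u_n^n}\leq \int_\Omega\frac{f\tilde\psi}{u_n^n}=\int_\Omega M(x)\nabla u_n\cdot\nabla\tilde\psi\longrightarrow 0,
$$
so $\mu$ has no mass in the interior of $\omega$; since $f\equiv 0$ outside $\overline\omega$, $\mu$ is supported on $\partial\omega$.

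Finally, by the Boccardo--Gallou\"et theory for equations with bounded-measure data, the uniform $L^1$ bound on $f/u_n^n$ gives $u_n$ uniformly bounded in $W^{1,q}_0(\Omega)$ for every $q<N/(N-1)$, so passing to the limit in the weak formulation yields a distributional solution $u$ of \eqref{pblim0}. Because $u\equiv 1$ on $\omega$ (hence $\nabla u=0$ there), while on $\Omega\setminus\omega$ the function $u$ coincides with the bounded $M$-harmonic extension with boundary values $1$ on $\partial\omega$ and $0$ on $\partial\Omega$, which lies in $W^{1,2}(\Omega\setminus\omega)$, the traces on $\partial\omega$ agree and hence $u\in W^{1,2}_0(\Omega)$. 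The main obstacle I expect is the concentration of $\mu$ on $\partial\omega$: it depends on the interior Caccioppoli estimate inside $\omega$ and on the weak convergence $\nabla u_n\rightharpoonup 0$ in $L^2_{\mathrm{loc}}(\omega)$, together with a judicious choice of admissible test functions; the remaining steps follow by now-classical arguments.
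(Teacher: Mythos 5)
Your proposal takes a genuinely different route from the paper in several places, and the middle of it is a nice simplification. The paper proves the $L^1(\Omega)$ bound on $f/u_n^n$ via a lower bound on the Green function of the adjoint operator, proves $u\equiv 1$ in $\omega$ by combining the $L^\infty$ estimate of Theorem \ref{otherprop} with the rather heavy lower bound of Theorem \ref{dasotto} (a De Giorgi--Stampacchia iteration built on Lemma \ref{leo}), and derives $\mu\llcorner\omega=0$ from the quantitative rate $u_n\geq (n+1)^{1/(n+1)}e^{-M_{\omega'}/(n+1)}$. You replace all of this with elementary arguments: the $L^1$ bound by testing with a cutoff $\psi\equiv 1$ near $\overline\omega$ and controlling $\|\nabla u_n\|_{L^2(\operatorname{supp}\nabla\psi)}$ by an interior Caccioppoli estimate for the $M$-harmonic $u_n$ in $\Omega\setminus\overline\omega$; $u\equiv1$ on $\omega$ by a direct contradiction using the $L^1$ bound and the lower bound on $f$; and $\mu\llcorner\omega=0$ by an interior Caccioppoli estimate inside $\omega$ combined with $\nabla u_n\rightharpoonup 0$ in $L^2_{\rm loc}(\omega)$. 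These parts are correct once the preliminary estimates are in place, and they buy you independence from Theorem \ref{dasotto}, which the paper's proof of Theorem~\ref{main1} otherwise relies on in an essential way.

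There are, however, two points to fix. First, a minor one: the $L^\infty$ bound on $u_n$ does not follow ``immediately'' from $(u_n-1)^+\to 0$ in $W^{1,2}_0(\Omega)$ --- that only gives $L^{2^*}$ control. You need the full family of inequalities $\alpha\int_\Omega|\nabla G_k(u_n)|^2\leq \int_\Omega f\,G_k(u_n)$ for $k\geq 1$ and a Stampacchia level-set iteration (as in Theorems \ref{daBO}--\ref{otherprop}), which yields a uniform bound $\|u_n\|_{L^\infty(\Omega)}\leq 1+C\|f\|_{L^\infty(\Omega)}$; this is what your Caccioppoli steps actually use. Related to this, you should justify that $G_1(u_n)$ and $G_k(u_n)$ are admissible test functions: they are not of compact support, so this does not follow from \eqref{defsol} alone; the correct reason is Remark \ref{ben}, which under the hypothesis $\{f>0\}\subset\subset\Omega$ gives $u_n\in W^{1,2}_0(\Omega)$ and extends the class of admissible test functions to all of $W^{1,2}_0(\Omega)$.

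The second point is a genuine gap. Your argument for $u\in W^{1,2}_0(\Omega)$ --- that $u$ coincides on $\Omega\setminus\omega$ with the bounded $M$-harmonic extension with traces $1$ on $\partial\omega$ and $0$ on $\partial\Omega$, ``which lies in $W^{1,2}(\Omega\setminus\omega)$'' --- is not rigorous: no regularity is assumed on $\partial\omega$ (nor on $\partial\Omega$), and a bounded $M$-harmonic function in $\Omega\setminus\overline\omega$ need not have finite Dirichlet energy up to an irregular boundary. The passage via the Boccardo--Gallou\"et $W^{1,q}_0$ bound for $q<N/(N-1)$ gives only a distributional solution, not the claimed $W^{1,2}_0$ regularity. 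The direct route, which the paper takes, is to invoke Remark \ref{ben} so that $u_n$ itself is admissible and to test \eqref{pbn} with $u_n$, giving
\begin{equation*}
\alpha\int_\Omega |\nabla u_n|^2 \leq \int_\Omega \frac{f\,u_n}{u_n^n} \leq \|u_n\|_{L^\infty(\Omega)}\int_\Omega\frac{f}{u_n^n}\leq C\,,
\end{equation*}
so $\{u_n\}$ is bounded in $W^{1,2}_0(\Omega)$ and its weak limit $u$ lies in $W^{1,2}_0(\Omega)$; passage to the limit in the weak formulation then uses weak $W^{1,2}_0$ convergence rather than the $W^{1,q}_0$ framework. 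With these two corrections your proof is complete and otherwise pleasantly more elementary than the paper's.
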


\begin{theorem}\label{main2}\sl
Let $f$ be a function belonging to $L^{\infty}(\Omega)$ such that for every $\omega \subset\subset \Omega$ there exists $c_{\omega} > 0$ such that $f \geq c_{\omega}$ in $\omega$. Let $\{\omega_{n}\}$ be an increasing sequence of compactly contained subsets of $\Omega$ such that their union is $\Omega$, and let $\un$ be the distributional solution of
\begin{equation}\label{pbn2}
\begin{cases}
\dis
-{\rm div}(M(x)\nabla \un) = \frac{f(x)\,\chi_{\omega_{n}}}{\un^{n}} & \mbox{in $\Omega$,} \\
\hfill \un = 0 \hfill & \mbox{on $\partial\Omega$.}
\end{cases}
\end{equation}
Then $\{\un\}$ is bounded in $\elle\infty$, so that it converges, up to subsequences, to a bounded function $u$, which is identically equal to~1 almost everywhere in $\Omega$. Moreover, the sequence $\{f(x)\chi_{\omega_{n}}/\un^{n}\}$ is unbounded in $\elle1$, and there is no limit equation for $u$.
\end{theorem}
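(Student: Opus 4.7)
The plan is to combine the uniform a priori estimates from Section~\ref{s3} with two impossibility arguments: one identifying the a.e.\ limit of $\{u_n\}$ as the constant function $1$, and one ruling out any limit equation.

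From the a priori estimates of Section~\ref{s3}, $\{u_n\}$ is uniformly bounded in $L^\infty(\Omega)$, satisfies a local lower bound $u_n\geq c_\omega>0$ on every $\omega\subset\subset\Omega$ (uniformly in $n$), and is uniformly bounded in $W^{1,2}_{\mathrm{loc}}(\Omega)$. Rellich's theorem and a diagonal extraction then yield a subsequence (not relabeled) with $u_n\to u$ a.e.\ in $\Omega$ and weakly in $W^{1,2}_{\mathrm{loc}}(\Omega)$, where $u$ is bounded and satisfies $u\geq c_\omega$ on every $\omega\subset\subset\Omega$.

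To prove $u\equiv 1$ a.e., I would argue in two steps. \emph{Lower bound $u\geq 1$:} suppose that $|\{u\leq 1-2\delta\}\cap\omega|>0$ for some $\omega\subset\subset\Omega$ and $\delta>0$. By Egorov there exists a subset $A\subset\omega$ of positive measure on which $u_n\leq 1-\delta$ for all sufficiently large $n$. Since eventually $\omega\subset\omega_n$ and $f\geq c_\omega$ on $\omega$, on $A$
\begin{equation*}
\frac{f(x)\chi_{\omega_n}}{u_n^n}\,\geq\,\frac{c_\omega}{(1-\delta)^n}\,\longrightarrow\,+\infty.
\end{equation*}
Choosing $\varphi\in C^1_c(\Omega)$ nonnegative with $\varphi\geq\chi_A$ and testing \eqref{pbn2} against $\varphi$ gives
\begin{equation*}
\io\frac{f\chi_{\omega_n}\varphi}{u_n^n}\,=\,\io M(x)\nabla u_n\cdot\nabla\varphi;
\end{equation*}
the left-hand side diverges while the right-hand side is uniformly bounded by the $W^{1,2}_{\mathrm{loc}}$-estimate, a contradiction. \emph{Upper bound $u\leq 1$:} test \eqref{pbn2} with $G_1(u_n)=(u_n-1)^+$, which lies in $W^{1,2}_0(\Omega)$ because $u_n^{(n+1)/2}\in W^{1,2}_0(\Omega)$ together with the $L^\infty$-bound forces $\{u_n\geq 1\}\subset\subset\Omega$. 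Elementary calculus yields $\max_{s\geq 1}(s-1)/s^n=(n-1)^{n-1}/n^n\leq 1/n$, whence
\begin{equation*}
\alpha\io|\nabla G_1(u_n)|^2\,\leq\,\io\frac{f\chi_{\omega_n}}{u_n^n}\,G_1(u_n)\,\leq\,\frac{\|f\|_{L^\infty(\Omega)}\,|\Omega|}{n}\,\longrightarrow\,0,
\end{equation*}
and Poincaré's inequality gives $G_1(u_n)\to 0$ in $L^2(\Omega)$, so $u\leq 1$ a.e.

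For the unboundedness of $\{f\chi_{\omega_n}/u_n^n\}$ in $L^1(\Omega)$ (whence the non-existence of a limit equation), I assume by contradiction that $\int_\Omega f\chi_{\omega_n}/u_n^n\leq C$ for every $n$. Setting $k:=\sup_n\|u_n\|_{L^\infty(\Omega)}<\infty$ and testing \eqref{pbn2} with $T_k(u_n)=u_n$ (admissible after a standard approximation, using that the trace vanishing of $u_n^{(n+1)/2}$ and the $L^\infty$-bound transfer to $u_n$ itself) gives
\begin{equation*}
\alpha\io|\nabla u_n|^2\,\leq\,\io\frac{f\chi_{\omega_n}}{u_n^n}\,u_n\,\leq\,k\,C,
\end{equation*}
so $\{u_n\}$ is uniformly bounded in $W^{1,2}_0(\Omega)$. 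Its weak limit $u$ then lies in $W^{1,2}_0(\Omega)$; but $u\equiv 1\notin W^{1,2}_0(\Omega)$, the desired contradiction. In particular, no Radon measure $\mu$ on $\Omega$ can arise as the $\ast$-weak limit of $f\chi_{\omega_n}/u_n^n$, so $u$ satisfies no limit equation. The principal technical obstacle will be justifying rigorously that both $G_1(u_n)$ and $T_k(u_n)$ are legitimate elements of $W^{1,2}_0(\Omega)$ under the minimal regularity provided by the distributional-solution framework; a cutoff-and-density argument exploiting the $L^\infty$-bound to propagate trace vanishing from $u_n^{(n+1)/2}$ to $u_n$ resolves this.
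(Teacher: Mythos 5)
Your argument splits into two halves: establishing $u\equiv 1$ and then deriving unboundedness of the right-hand side. The second half is a genuinely nicer route than the paper's: you test with $u_n$ directly (legitimate by Remark~\ref{ben}, since $f\chi_{\omega_n}$ has compact support, so no approximation is needed), obtain a uniform $W^{1,2}_0(\Omega)$ bound under the contradiction hypothesis, and conclude immediately because $u\equiv 1$ cannot lie in $W^{1,2}_0(\Omega)$. The paper instead invokes the duality-solution machinery of Remark~\ref{dual} together with Stampacchia's theory to reach the same contradiction through $u\in W^{1,1}_0(\Omega)$; your version is more elementary.

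The first half, however, has a gap. Your ``$u\geq 1$'' step hinges on the assertion that $\{u_n\}$ is \emph{uniformly} bounded in $W^{1,2}_{\rm loc}(\Omega)$; you attribute this to ``the a priori estimates of Section~\ref{s3}'', but Section~\ref{s3} only produces a $W^{1,2}_{\rm loc}$ bound at fixed $\gamma$, with a constant proportional to $c_{\omega,\gamma}^{-\gamma}$, which blows up as $n\to\infty$ if one only knows a crude uniform lower bound $u_n\geq c_\omega$. To make the constant uniform you must invoke the sharp lower bound \eqref{frombelow} from Theorem~\ref{dasotto}; but that estimate already gives $u_n\to 1$ uniformly on compact subsets (Remark~\ref{cons}), which is exactly the statement $u\equiv 1$. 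So the Egorov argument and the $G_1(u_n)$ test (both correct in themselves) become superfluous detours once the one ingredient they actually require has been admitted. Separately, your justification that $G_1(u_n)\in W^{1,2}_0(\Omega)$ via ``$\{u_n\geq 1\}\subset\subset\Omega$'' is both unsupported and unnecessary: Remark~\ref{ben} gives $u_n\in W^{1,2}_0(\Omega)$ directly, so $G_1(u_n)\in W^{1,2}_0(\Omega)$ with no further argument. The clean structure is: cite Theorem~\ref{dasotto}/Remark~\ref{cons} for $u\equiv 1$, then run your $u_n$-test-function contradiction for unboundedness.
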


If $M(x)\equiv I$, we have that $\dis \left\{v_n=\frac{u_n^{n+1}}{n+1}\right\}$ is a sequence of distributional solutions to the following problem
\begin{equation}\label{parn}
\begin{cases}
\dis -\Delta v_n + \frac{n}{n+1}\,\frac{|\nabla v_n|^{2}}{v_n} = f(x) & \mbox{in $\Omega$,} \\
\hfil v_n = 0 & \mbox{on $\partial\Omega$.}
\end{cases}
\end{equation}
To be complete we give the definitions of distributional and weak solution for quasilinear elliptic equations with singular and gradient quadratic lower order term whose model is 
\begin{equation}
\label{24pb}
\begin{cases}
\dis -\Delta v + B\,\frac{|\nabla v|^2}{v} = f & \mbox{in $\Omega$,} \\
\hfil v = 0  & \mbox{on $\partial\Omega$,}
\end{cases}
\end{equation}
where $B>0$.
\begin{defin}
A function $v$ in $W^{1,2}_0(\Omega)$ is a weak solution of \eqref{24pb} if the following conditions are satisfied:
\begin{itemize}
\item[i)] $v>0$ almost everywhere in $\Omega$,
\item[ii)] $\dis \frac{|\nabla v|^2}{v}$ belongs to $L^1(\Omega)$,
\item[iii)] it holds
$$
\io \nabla v\cdot\nabla\varphi + B\,\io \frac{|\nabla v|^{2}}{v}\,\varphi =\io f\,\varphi\,, \quad \forall\, \varphi\in \w\cap\linf\,.
$$
\end{itemize}
\end{defin}
\begin{defin}
A function $v$ in $W^{1,1}_0(\Omega)$ is a distributional solution of \eqref{24pb} if the following conditions are satisfied:
\begin{itemize}
\item[i)] $v>0$ almost everywhere in $\Omega$,
\item[ii)] $\dis \frac{|\nabla v|^2}{v}$ belongs to $L^1(\Omega)$,
\item[iii)] it holds
$$
\io \nabla v\cdot\nabla\varphi + B\,\io \frac{|\nabla v|^{2}}{v}\,\varphi =\io f\,\varphi\,, \quad \forall\, \varphi\in C^1_c(\Omega)\,.
$$
\end{itemize}
\end{defin}
By passing to the limit in \eqref{parn}, we prove, in the case $f$ {\sl strictly positive}, the following existence theorem of weak solution to problem \eqref{pbv4}.
\begin{theorem}
\label{main3}
Let $f$ be a $\elle\infty$ function such that for every $\omega \subset\subset \Omega$ there exists $c_{\omega} > 0$ such that $f \geq c_{\omega}$ in $\omega$. Then $\displaystyle\left\{v_n=\frac{u_n^n+1}{n+1}\right\}$ is bounded in $\w\cap \linf$, so that it converges, up to subsequences, to a bounded nonnegative function $v$ which is a weak solution of 
\begin{equation}\label{pbv4}
\begin{cases}
\dis -\Delta v +\frac{|\nabla v|^{2}}{v} = f & \mbox{in $\Omega$,} \\
\hfill v = 0 \hfill & \mbox{on $\partial\Omega$.}
\end{cases}
\end{equation}
\end{theorem}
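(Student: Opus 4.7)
The argument has three stages: (i) uniform bounds on $v_n$ in $W^{1,2}_0(\Omega)\cap L^\infty(\Omega)$ and on the singular term $\tfrac{|\nabla v_n|^2}{v_n}$ in $L^1(\Omega)$; (ii) extraction of a subsequence converging to a bounded, a.e.\ positive $v$; (iii) passage to the limit in the weak formulation of (\ref{parn}). Stage (iii) is the main obstacle, because of the singular quadratic gradient term.

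\textbf{A priori estimates.} Since $\tfrac{n}{n+1}\tfrac{|\nabla v_n|^2}{v_n}\ge 0$, $v_n$ is a distributional subsolution of $-\Delta w=f$ with zero Dirichlet datum, so the maximum principle gives a uniform bound $\|v_n\|_{L^\infty}\le C_0$. Next, the definition of distributional solution of (\ref{pbn}) implies $u_n^{(n+1)/2}\in W^{1,2}_0(\Omega)$, and so $v_n=(u_n^{(n+1)/2})^2/(n+1)$ lies in $W^{1,2}_0(\Omega)\cap L^\infty(\Omega)$ and is admissible as a test function in (\ref{parn}); using $\tfrac{|\nabla v_n|^2}{v_n}\cdot v_n=|\nabla v_n|^2$ one obtains
\[
\Big(1+\tfrac{n}{n+1}\Big)\io|\nabla v_n|^2=\io f\,v_n\le \|f\|_\infty|\Omega|\,C_0,
\]
which gives the $W^{1,2}_0$-bound. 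For the quadratic singular term, the chain-rule identity $\tfrac{|\nabla v_n|^2}{v_n}=(n+1)\,u_n^{n-1}|\nabla u_n|^2$ combined with testing the equation for $u_n$ with (a suitable approximation of) $u_n^n/n$ produces $\io u_n^{n-1}|\nabla u_n|^2\le \|f\|_1/n$, hence $\big\|\tfrac{|\nabla v_n|^2}{v_n}\big\|_{L^1}\le \tfrac{n+1}{n}\|f\|_1\le 2\|f\|_1$.

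\textbf{Convergence and positivity.} By reflexivity and Rellich, up to a subsequence $v_n\rightharpoonup v$ in $W^{1,2}_0(\Omega)$, $v_n\to v$ strongly in every $L^q(\Omega)$ for $q<2^*$ and a.e.\ in $\Omega$, with $v$ bounded and nonnegative. Strict positivity of $v$ inside $\Omega$ is inherited from the ($n$-independent) local lower bounds on $v_n$ established in Section \ref{s3} under the strictly positive hypothesis on $f$: for every $\omega\subset\subset\Omega$ one has $v_n\ge c_\omega>0$ in $\omega$, hence the same holds for $v$.

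\textbf{Passage to the limit.} Weak convergence passes to the limit in $\io \nabla v_n\cdot \nabla\varphi$ and in the datum, but not in the singular quadratic term $\tfrac{n}{n+1}\io\tfrac{|\nabla v_n|^2}{v_n}\varphi$. The plan is to prove the a.e.\ convergence $\nabla v_n\to \nabla v$ by a Boccardo--Murat-type argument based on the exponential function $\phi_\lambda$ of (\ref{defphilam}): testing (\ref{parn}) with $\phi_\lambda(v_n-T_k(v))$ for $\lambda$ large enough, the absorption inequality (\ref{defphilam2}) neutralizes the quadratic gradient contribution and forces $\int_\omega|\nabla(v_n-v)|^2\to 0$ for every $\omega\subset\subset\Omega$ (on which $v_n\ge c_\omega>0$). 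Combined with the uniform $L^1$-bound on $\tfrac{|\nabla v_n|^2}{v_n}$ and the local lower bound, Vitali's theorem then yields the passage to the limit in the quadratic term for test functions $\varphi\in C^1_c(\Omega)$. Fatou's lemma delivers $\tfrac{|\nabla v|^2}{v}\in L^1(\Omega)$, and a standard density-and-truncation argument extends the weak identity to every $\varphi\in W^{1,2}_0(\Omega)\cap L^\infty(\Omega)$, so that $v$ is a weak solution of (\ref{pbv4}).
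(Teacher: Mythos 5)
Your strategy is essentially the paper's: uniform $L^\infty$ and $W^{1,2}_0$ bounds on $v_n$, a Boccardo--Murat-type exponential test function to obtain strong $W^{1,2}_{\mathrm{loc}}(\Omega)$ convergence of the gradients, and then passage to the limit using the $n$-independent local lower bounds $v_n\ge e^{-M_\omega}>0$ on every $\omega\subset\subset\Omega$ coming from Theorem \ref{dasotto}. One genuine extra ingredient you bring in that the paper does not: the a priori bound $\big\|\,|\nabla v_n|^2/v_n\,\big\|_{L^1(\Omega)}\le \frac{n+1}{n}\|f\|_{L^1(\Omega)}$, obtained by testing the $u_n$-equation with $u_n^n$ together with the algebraic identity $|\nabla v_n|^2/v_n=(n+1)\,u_n^{n-1}|\nabla u_n|^2$. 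The paper instead establishes $|\nabla v|^2/v\in L^1(\Omega)$ only a posteriori, by taking $T_\varepsilon(v)/\varepsilon$ as test function in the limit identity and applying Fatou; your route is more direct on this point.

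There is, however, a genuine gap in the absorption step as you wrote it. You propose to test \eqref{parn} with $\phi_\lambda(v_n-T_k(v))$ with no spatial cutoff. Two problems. First, since $v$ is bounded, $T_k(v)=v$ for $k$ large, so the truncation is doing nothing here, while the real obstacle is that, at this stage of the argument (Proposition \ref{link} and Remark \ref{remcomsup}), the weak formulation for $v_n$ has only been justified against test functions in $W^{1,2}_0(\Omega)$ \emph{with compact support}; $\phi_\lambda(v_n-v)$ is not one. Second, and more seriously, the absorption inequality \eqref{defphilam2} only neutralizes the singular quadratic contribution if the coefficient $b$ multiplying $|\phi_\lambda|$ is uniformly controlled, and here that coefficient is of order $1/v_n$ on the set of integration. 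The lower bound $v_n\ge e^{-M_\omega}$ holds only on $\omega\subset\subset\Omega$ and degenerates near $\partial\Omega$, so there is no single $\lambda$ that makes \eqref{defphilam2} operative on all of $\Omega$. The paper resolves this by testing with $\phi_\lambda(v_n-v)\,\varphi$, $\varphi\in C^1_c(\Omega)$ nonnegative, and then choosing $\lambda\ge e^{2M_{\omega_\varphi}}$ with $\omega_\varphi=\mathrm{supp}\,\varphi$; this localization is exactly what closes the estimate and yields $\int_\omega|\nabla(v_n-v)|^2\to 0$. Your conclusion is the right one, but it does not follow from the global test function you wrote; you must reinstate the cutoff. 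Once that is fixed, the remainder of your passage to the limit (strong local convergence of $\nabla v_n$, local lower bound, Lebesgue/Vitali, then density and truncation to extend the class of test functions) goes through as in the paper.
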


On the other hand, if $f$ is zero in a neighborhood of $\partial \Omega$, we show by a one-dimensional explicit example that the function obtained as limit of our approximation is zero in a subset of $\Omega$ with strictly positive measure. In other words, we will prove the following result.

\begin{theorem}
\label{main4}
Let $\Omega=(-2,2)$ and $\omega=(-1,1)$. Let $u_n$ in $W^{1,2}_0((-2,2))$ be the weak solution of
\begin{equation}
\label{oned00}
\begin{cases}
\dis -u_n''(t) =\frac{\chi_{(-1,1)}}{u_n^n} & \mbox{ in } (-2,2)\,, \\
u_n(\pm 2)\, =\,0\,.
\end{cases}
\end{equation}
Let $\dis v_n=\frac{u_n^{n+1}}{n+1}$ be a weak solution of
\begin{equation}\label{pbv5}
\begin{cases}
\dis -v_n'' +\frac{n}{n+1}\frac{|v_n'|^{2}}{v_n}\, =\,\chi_{(-1,1)}  & \mbox{in $(-2,2)$,} \\
v_n(\pm 2)\, =\, 0\,,
\end{cases}
\end{equation}
then $\{v_n\}$ weakly converges to a function $v$ in $W^{1,2}_0((-2,2))$ and $v$, belonging to $C^\infty_0((-1,1))$, is a classical solution of
\begin{equation}\label{pbv6}
\begin{cases}
\dis -v'' +\frac{|v'|^{2}}{v} \, =\, 1 & \mbox{in $(-1,1)$,} \\
v(\pm 1)\, = \,0 \,.
\end{cases}
\end{equation}
Moreover $\dis v(t)=\frac{2}{\pi^2}\cos^2\left(\frac{\pi}{2}t\right)$ in $(-1,1)$ and $v(t)\equiv 0$ in $[-2,-1] \cup [1,2]$.
\end{theorem}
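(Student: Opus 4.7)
The proof exploits the one-dimensional structure: every ODE arising is explicitly integrable by quadrature and the limits can be computed in closed form. The qualitative picture is already provided by Theorem \ref{main1}, and the main obstacle is pinning down $v$ on $(-1,1)$: the quantity $v_n(0)=A_n^{n+1}/(n+1)$ is an indeterminate form since $A_n:=u_n(0)\to 1$ while $n+1\to\infty$, and its precise asymptotics must be extracted from the first integral of $-u_n''=u_n^{-n}$.

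I would begin by applying Theorem \ref{main1} with $\omega=(-1,1)$ and $f\equiv 1$ on $\omega$: $\{u_n\}$ is bounded in $L^\infty$ and, up to a subsequence, converges in $W^{1,2}_0((-2,2))$, hence uniformly by the one-dimensional Sobolev embedding, to $u(t)=\min\{1,2-|t|\}$. By symmetry each $u_n$ is even, and since $f$ vanishes outside $[-1,1]$, $u_n$ is affine on $[1,2]$ with $u_n(t)=c_n(2-t)$ and $c_n:=u_n(1)\to 1$. On $(0,1)$ the ODE $-u_n''=u_n^{-n}$ with $u_n'(0)=0$ admits the first integral
\[
(u_n')^2 \;=\; \tfrac{2}{n-1}\bigl(u_n^{1-n}-A_n^{1-n}\bigr),
\]
and the $C^1$-matching $u_n'(1^-)=-c_n$ gives $A_n^{1-n}=c_n^{1-n}-\tfrac{n-1}{2}c_n^2$, forcing $c_n^{n+1}<\tfrac{2}{n-1}$. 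Consequently $v_n(\pm 1)=c_n^{n+1}/(n+1)\leq 2/((n-1)(n+1))\to 0$, and on $[1,2]$ the identity $v_n(t)=\tfrac{c_n^{n+1}}{n+1}(2-t)^{n+1}$ forces $v_n\to 0$ uniformly; by symmetry the same holds on $[-2,-1]$. Testing \eqref{pbv5} against $v_n$ then bounds $\{v_n\}$ in $W^{1,2}_0((-2,2))$, so that any weak limit $v$ vanishes outside $(-1,1)$.

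To identify $v$ on $(-1,1)$, separating variables in the first integral and substituting $u_n=A_n\sigma$, $\sigma=e^{-\tau/(n-1)}$, produces the closed-form quadrature
\[
v_n(0) \;=\; \frac{A_n^{n+1}}{n+1} \;=\; \frac{2(n-1)}{(n+1)\,J_n^2}, \qquad J_n \;:=\; \int_0^{T_n}\frac{e^{-\tau/(n-1)}}{\sqrt{e^\tau-1}}\,d\tau,
\]
with $T_n:=-\log\bigl(1-\tfrac{n-1}{2}c_n^{n+1}\bigr)$. Since the elementary identity $\int_0^\infty d\tau/\sqrt{e^\tau-1}=\pi$ yields $J_n\leq\pi$, one obtains $\liminf_n v_n(0)\geq 2/\pi^2>0$; combining this bound with $v_n(\pm 1)\to 0$ and the relation $v_n(\pm 1)=v_n(0)\,e^{-(n+1)T_n/(n-1)}$ forces $T_n\to\infty$, whence by dominated convergence $J_n\to\pi$ and $v_n(0)\to 2/\pi^2$. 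The analogous quadrature on $(0,t)$, for fixed $t\in(-1,1)$, gives pointwise convergence of $v_n(t)$ to a limit satisfying the first integral $(v')^2=2v(1-\pi^2 v/2)$; the substitution $v=\tfrac{2}{\pi^2}\sin^2\theta$ integrates this explicitly to $v(t)=\tfrac{2}{\pi^2}\cos^2(\tfrac{\pi}{2}t)$. Equicontinuity upgrades pointwise to uniform convergence on compact subsets of $(-1,1)$, and a direct differentiation confirms that $v\in C^\infty_0((-1,1))$ classically solves $-v''+|v'|^2/v=1$ in $(-1,1)$, concluding the proof.
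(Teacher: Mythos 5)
Your proof is correct, and its computational core is the same as the paper's: the first integral $(u_n')^2=\tfrac{2}{n-1}\bigl(u_n^{1-n}-A_n^{1-n}\bigr)$, the quadrature giving $v_n(0)=\tfrac{2(n-1)}{(n+1)J_n^2}$, and the explicit identification $v(t)=\tfrac{2}{\pi^2}\cos^2(\tfrac{\pi}{2}t)$. But you take a genuinely leaner route in two places. First, the paper builds $u_n$ by shooting from $t=0$: it introduces the scaling parameter $\alpha_n=(c_n(n-1))^{1/(n+1)}$ and needs Lemma \ref{sucn} (a monotonicity-plus-Bolzano argument) to prove a unique $c_n$ exists making the affine continuation hit zero at $t=2$. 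You invert this: you take existence (Theorem \ref{daBO}), uniqueness (Remark \ref{bocas}) and the resulting evenness of $u_n$ as given, read off $A_n=u_n(0)$ and $c_n=u_n(1)$, and obtain the $C^1$-matching relation $A_n^{1-n}=c_n^{1-n}-\tfrac{n-1}{2}c_n^2$ directly, so Lemma \ref{sucn} disappears and the bound $c_n^{n+1}<2/(n-1)$ comes for free. Second, to pin down the constant the paper passes through the Beta/Gamma identities of \cite{AS}, bounds the rescaled first zero $T_n^{\rm paper}$ in $[1,2]$ by a comparison $w_n\le y_n$, extracts $c_\infty\in[2/\pi^2,8/\pi^2]$, and then needs a separate contradiction argument to show $T_\infty=1$. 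You replace all of this with the elementary identity $\int_0^\infty (e^\tau-1)^{-1/2}d\tau=\pi$: it gives $J_n\le\pi$, hence $\liminf v_n(0)\ge 2/\pi^2$; combined with $v_n(\pm1)\to0$ and the relation $v_n(\pm1)=v_n(0)e^{-(n+1)T_n/(n-1)}$ this forces $T_n\to\infty$, and dominated convergence closes the loop. This is shorter, special-function-free, and makes transparent why $2/\pi^2$ appears. The only step that deserves one more sentence is the ``analogous quadrature on $(0,t)$'': spelling out that $\int_0^{\tau_n(t)}e^{-\tau/(n-1)}(e^\tau-1)^{-1/2}d\tau=J_n\,t$ with $\tau_n(t)=(n-1)\log(A_n/u_n(t))$, so $\tau_n(t)\to\tau_\infty(t)$ with $2\arctan\sqrt{e^{\tau_\infty(t)}-1}=\pi t$ and $v_n(t)=v_n(0)e^{-(n+1)\tau_n(t)/(n-1)}\to\tfrac{2}{\pi^2}\cos^2(\tfrac{\pi}{2}t)$; but this is an elaboration, not a gap.
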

 
\begin{remark}
\label{sharp}
It follows from Theorem \ref{main4} that if $f$ is only {\sl nonnegative} we cannot obtain by approximation a positive solution of \eqref{pbv4}. This implies that the assumption $f$ {\sl strictly positive} is necessary (and not only technical) to have positive solutions on the whole $\Omega$ to problem \eqref{pbv4}. Hence the existence results contained in \cite{AMA,M-A,ABMA,ACLMOP,cmar} are optimal.
\end{remark}

\section{Estimates from above and from below}
\label{s3}

In \cite{bo}, existence results for distributional solutions of \eqref{pbo} have been proved. To be more precise, we have the following theorem in the case $\gamma>1$.
\begin{theorem}\label{daBO}\sl
Let $\gamma > 1$, and let $f$ be in $\elle\infty$, with $f \geq 0$ in $\Omega$, $f$ not identically zero. Then there exists a distributional solution $u$ of \eqref{pbo}, with $u$ in $W^{1,2}_{{\rm loc}}(\Omega) \cap \elle\infty$. Moreover we can extend the class of test functions in the sense that
\begin{equation}\label{defsol}
\io M(x)\D u \cdot \D\vp = \io \frac{f\,\vp}{u^{\gamma}},
\qquad
\forall \vp \in W^{1,2}_{0}(\Omega) \text{ with compact support}.
\end{equation}
\end{theorem}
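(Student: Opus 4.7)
The plan is to follow the approximation scheme of Boccardo and Orsina. I would first consider the regularized problems
\begin{equation*}
-\diver(M(x)\D \un) \,=\, \frac{f}{(\un+\tfrac{1}{n})^\gamma}\,\text{ in }\Omega, \qquad \un\in\w,
\end{equation*}
in which the singular denominator is shifted. For fixed $n$ the right-hand side is bounded in $\linf$, so a Schauder fixed-point argument applied to the map $w\mapsto u$ defined by $-\diver(M\D u)=f/((w^+)+1/n)^\gamma$ (well-defined by Lax--Milgram) produces a nonnegative $\un\in\w\cap\linf$. Since $f\not\equiv 0$, the strong maximum principle (in the form of Stampacchia) gives $u_1\geq c_\omega>0$ on each $\omega\subset\subset\Omega$, and the comparison principle---both the datum and $s\mapsto(s+\tfrac{1}{n})^{-\gamma}$ are nondecreasing in $n$---yields $\un\leq u_{n+1}$, so this lower bound carries over to every $\un$.

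Three uniform estimates then allow passage to the limit. \emph{Uniform $\linf$ bound.} Testing with $G_k(\un)\in\w$, on $\{\un>k\}$ one has $(\un+1/n)^{-\gamma}\leq k^{-\gamma}$, so
\begin{equation*}
\alpha\,\io|\D G_k(\un)|^2 \,\leq\, \frac{\|f\|_\infty}{k^\gamma}\,\io G_k(\un),
\end{equation*}
and the standard Stampacchia iteration on level sets yields $\|\un\|_\infty\leq K$ with $K$ independent of $n$. \emph{Weighted energy estimate.} The admissible test function $(\un+\tfrac{1}{n})^\gamma-(\tfrac{1}{n})^\gamma\in\w$ produces, on the right-hand side, $\io f\,[1-(1+n\un)^{-\gamma}]\leq\|f\|_1$, while the left-hand side controls $\alpha\gamma\io(\un+\tfrac{1}{n})^{\gamma-1}|\D\un|^2$; since $\gamma\geq 1$, this dominates $\tfrac{4\alpha\gamma}{(\gamma+1)^2}\io|\D\un^{(\gamma+1)/2}|^2$, whence $\{\un^{(\gamma+1)/2}\}$ is bounded in $\w$. \emph{Local $W^{1,2}$ bound.} For $\eta\in C^1_c(\Omega)$, test with $\eta^2\un$ and use the positive lower bound on $\un$ on the support of $\eta$: Young's inequality gives $\io\eta^2|\D\un|^2\leq C_\eta$.

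The passage to the limit is now classical: up to a subsequence $\un\uparrow u$ a.e.\ and weakly in $W^{1,2}_{\rm loc}(\Omega)$, with $u\in\linf$, $u\geq c_\omega>0$ on each $\omega\subset\subset\Omega$, and $u^{(\gamma+1)/2}\in\w$ by weak lower semicontinuity (this gives meaning to the boundary condition). For any $\vp\in C^1_c(\Omega)$ with $\supp(\vp)\subset\omega\subset\subset\Omega$, weak convergence yields the left-hand side, and the right-hand integrand is dominated by $\|f\|_\infty/c_\omega^\gamma$ on $\supp(\vp)$, so Lebesgue's theorem gives the distributional identity. For the extended class in \eqref{defsol}, given $\vp\in\w$ with compact support in some $\omega\subset\subset\Omega$, I would approximate $\vp$ in $\w$ by $\vp_k\in C^1_c(\Omega)$ whose supports remain inside a fixed $\omega'\subset\subset\Omega$ (mollification after extension by zero); both sides pass to the limit because $u\in W^{1,2}(\omega')$ and $f/u^\gamma\in L^\infty(\omega')$.

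\emph{Main obstacle.} The delicate point is the weighted energy estimate: when $\gamma>1$, $\un^\gamma$ need not lie in $\w$, so one cannot test directly with it. The shifted power $(\un+1/n)^\gamma-(1/n)^\gamma$ is the standard remedy: it belongs to $\w$ thanks to the regularization, vanishes on $\partial\Omega$, and upon differentiation still reproduces the weight $\un^{\gamma-1}|\D\un|^2$ needed to control $\un^{(\gamma+1)/2}$ in $\w$ uniformly in $n$.
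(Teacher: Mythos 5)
Your proposal follows the Boccardo--Orsina approximation scheme essentially identically to the paper: regularize the singularity by $1/n$, obtain monotonicity and a local lower bound via maximum principle, derive a uniform $L^\infty$ bound by Stampacchia iteration on $G_k(u_n)$, a uniform bound on $u_n^{(\gamma+1)/2}$ in $W^{1,2}_0$, a local $W^{1,2}$ bound by testing with $\eta^2 u_n$, and then pass to the limit using local positivity to handle the nonlinearity. The only point of divergence is the choice of test function for the weighted energy estimate, where the paper tests with $u_m^\gamma$ while you use $(u_n+\tfrac1n)^\gamma-(\tfrac1n)^\gamma$; both give the same estimate.

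Your remark under ``Main obstacle'' is, however, based on a misconception: for $\gamma>1$ the function $u_n^\gamma$ \emph{does} belong to $W^{1,2}_0(\Omega)$. Indeed, for each fixed $n$ the right-hand side of the regularized problem is bounded, so $u_n\in W^{1,2}_0(\Omega)\cap L^\infty(\Omega)$, and since $s\mapsto s^\gamma$ is Lipschitz on $[0,\|u_n\|_\infty]$ with value $0$ at the origin (precisely because $\gamma\ge 1$, so the derivative $\gamma s^{\gamma-1}$ is bounded near $0$), the composition is a valid $W^{1,2}_0$ test function with $\nabla(u_n^\gamma)=\gamma u_n^{\gamma-1}\nabla u_n$. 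The problem you describe --- an unbounded derivative of the power near the origin --- occurs for $\gamma<1$, not $\gamma>1$. Your shifted test function is a harmless alternative and yields the correct estimate via $(u_n+\tfrac1n)^{\gamma-1}\ge u_n^{\gamma-1}$, so the proof still goes through; the point is only that the extra care was not needed here.
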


\begin{proof}[{\sl Sketch of the proof of Theorem \ref{daBO}}]

Following \cite{bo}, let $m$ in $\N$ and consider the approximated problems
\begin{equation}\label{pnm}
\begin{cases}
\dis
-{\rm div}(M(x)\D \um) = \frac{f}{(\um + \frac1m)^{\gamma}} & \mbox{in $\Omega$,} \\
\hfill \um > 0 \hfill & \mbox{in $\Omega$,} \\
\hfill \um = 0 \hfill & \mbox{on $\partial\Omega$.}
\end{cases}
\end{equation}
The existence of a solution $\um$ can be easily proved by means of the Schauder fixed point theorem. Since the sequence $\dis g_{m}(s) = \frac{1}{(s + \frac1m)^{\gamma}}$ is increasing in $m$, standard elliptic estimates imply that the sequence $\{\um\}$ is increasing, so that $\um \geq u_{1}$, and there exists the pointwise limit $u$ of $\um$. Since (by the maximum principle) for every $\omega \subset\subset \Omega$ there exists $c_{\omega,\gamma} > 0$ such that $u_{1} \geq c_{\omega,\gamma}$ in $\omega$, it then follows that $\um$ (and so $u$) has the same property. 

Choosing $\um^{\gamma}$ as test function in \eqref{pnm} we obtain, using \eqref{albe}, that
$$
\frac{4\alpha\gamma}{(\gamma+1)^{2}} \int_{\Omega} |\nabla \um^{\frac{\gamma+1}{2}}|^{2}
\leq
\gamma \int_{\Omega} M(x) \nabla\um \cdot \nabla\um\,\um^{\gamma-1}
=
\int_{\Omega}\,\frac{f \,\um^{\gamma}}{(\um + \frac1m)^{\gamma}}
\leq
\int_{\Omega}\,f\,.
$$
Therefore, $\{\um^{\frac{\gamma+1}{2}}\}$ is bounded in $\w$. Choosing $\um\,\vp^{2}$ as test function in \eqref{pnm}, with $\vp$ in $C^{1}_{0}(\Omega)$, we obtain, using again \eqref{albe},
$$
\alpha\,\int_{\Omega}|\nabla\um|^{2}\,\vp^{2}
+
2 \int_{\Omega}\,M(x) \nabla\um \nabla \vp \, \um\,\vp
\leq
\int_{\Omega}\,\frac{f\,\um\,\vp^{2}}{(\um + \frac1m)^{\gamma}}\,.
$$
Hence, if $\omega = \{\vp \neq 0\}$, recalling that $\um \geq c_{\omega,\gamma} > 0$ in $\omega$, we have, by Young's inequality,
$$
\alpha\,\int_{\Omega} |\nabla\um|^{2}\,\vp^{2}
\leq
\frac{\alpha}{2}\,\int_{\Omega} |\nabla\um|^{2}\,\vp^{2}
+
C\,\int_{\Omega}\,|\nabla\vp|^{2}\,\um^{2}
+
\frac{\norma{f\vp^2}{\elle\infty}}{c_{\omega,\gamma}^{\gamma}}\int_{\Omega}\,\um\,.
$$
Since $\um$ is bounded in $\elle2$ (recall that $\um^{\frac{\gamma+1}{2}}$ is bounded in $\w$, so that $\um^{\gamma+1}$ is bounded in $\elle1$ by Poincar\'e inequality, and that $\gamma > 1$), we thus have
$$
\int_{\Omega}\,|\nabla\um|^{2}\,\vp^{2} \leq C\,,
$$
so that the sequence $\{\um\}$ is bounded in $W^{1,2}_{{\rm loc}}(\Omega)$. Let now $k > 1$ and choose $G_{k}(\um)$ as test function in \eqref{pnm}. We obtain, using \eqref{albe},
$$
\alpha \int_{\Omega} |\nabla G_{k}(\um)|^{2} \leq \int_{\Omega}\,\frac{f\,G_{k}(\um)}{(\um + \frac1m)^{\gamma}}
\leq
\frac{1}{k^{\gamma}}\,\int_{\Omega}\,f\,G_{k}(\um)\,,
$$
so that
$$
\alpha \int_{\Omega} |\nabla G_{k}(\um)|^{2} \leq \int_{\Omega}\,f\,G_{k}(\um)\,, \qquad \forall k \geq 1\,.
$$
Starting from this inequality, and reasoning as in Theorem 4.2 of \cite{sta}, we can prove that $\um$ is uniformly bounded in $\elle\infty$, so that $u$ belongs to $\elle\infty$ as well.

Once we have the {\sl a priori} estimates on $\um$, we can pass to the limit in the approximate equation with test functions $\vp$ in $W^{1,2}_{0}(\Omega)$ with compact support; indeed
$$
\lim_{m \to +\infty}\,\int_{\Omega} M(x)\,\nabla\um \cdot \nabla \vp = \int_{\Omega} M(x)\,\nabla u \cdot \nabla\vp\,,
$$
since $\um$ is weakly convergent to $u$ in $W^{1,2}_{{\rm loc}}(\Omega)$, and
$$
\lim_{m \to +\infty}\,\int_{\Omega}\,\frac{f\,\vp}{(\um + \frac1m)^{\gamma}} = \int_{\Omega}\,\frac{f\,\vp}{u^{\gamma}}\,,
$$
by the Lebesgue theorem, since $\um \geq c_{\{\vp \neq 0\},\gamma} > 0$ on the support of $\vp$.
\end{proof}
Since the formulation of distributional solution for \eqref{pbo} is not suitable for our purposes, we are going to better specify the class of test functions which are admissible for the problem \eqref{pbo} to obtain estimates from above for $u$.
We start with the following theorem.
\begin{theorem}\label{otherprop}\sl
The solution $u$ of \eqref{pbo} given by Theorem \ref{daBO} is such that:
\begin{itemize}
	\item[i)] $u^{\gamma+1}$ belongs to $\w$;
	
\item[ii)] 	
\begin{equation}\label{newform}
\int_{\Omega} M(x)\,\nabla \Big( \frac{u^{\gamma+1}}{\gamma+1} \Big) \cdot \nabla v
\leq
\int_{\Omega} f\,v\,,
\qquad
\forall v \in \w\,,\ v \geq 0\,;
\end{equation}

	\item[iii)] 
\begin{equation}\label{stimalinfty}
\norma{u}{\elle\infty} \leq [C\,(\gamma+1)\,\norma{f}{\elle\infty}]^{\frac{1}{\gamma+1}}\,,
\end{equation}
for some constant $C > 0$, independent on $\gamma$.
\end{itemize}
\end{theorem}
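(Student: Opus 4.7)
The plan is to return to the approximating problems \eqref{pnm} from the proof of Theorem \ref{daBO}: each $u_m$ lies in $W^{1,2}_0(\Omega)\cap L^\infty(\Omega)$ (being the solution of a nondegenerate problem with $L^\infty$ datum), the sequence $\{u_m\}$ is uniformly bounded in $L^\infty(\Omega)$ by the Stampacchia argument recalled in the sketch, and $u_m\nearrow u$ pointwise. All three items will be obtained by testing \eqref{pnm} against suitable powers and products involving $u_m$.

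For (i), the natural choice is to test \eqref{pnm} with $u_m^{2\gamma+1}$, admissible since $u_m\in W^{1,2}_0(\Omega)\cap L^\infty(\Omega)$. Ellipticity on the left and $u_m^\gamma/(u_m+\tfrac1m)^\gamma\leq 1$ on the right give
$$\alpha(2\gamma+1)\int_\Omega u_m^{2\gamma}|\nabla u_m|^2 \;\leq\; \|f\|_{L^\infty(\Omega)}\int_\Omega u_m^{\gamma+1},$$
whose right-hand side is uniformly bounded by the $L^\infty$ bound on $u_m$. Since $|\nabla u_m^{\gamma+1}|^2=(\gamma+1)^2\,u_m^{2\gamma}|\nabla u_m|^2$, the sequence $\{u_m^{\gamma+1}\}$ is bounded in $W^{1,2}_0(\Omega)$; combined with the pointwise (and hence, by dominated convergence, $L^p$) convergence $u_m^{\gamma+1}\to u^{\gamma+1}$, the weak $W^{1,2}_0$-limit is identified as $u^{\gamma+1}$. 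For (ii), take $v\in W^{1,2}_0(\Omega)\cap L^\infty(\Omega)$ with $v\geq 0$ and test with $u_m^\gamma v$; by the chain rule the equation becomes
$$\gamma\int_\Omega M(x)\nabla u_m\cdot\nabla u_m\, u_m^{\gamma-1}v \;+\; \frac{1}{\gamma+1}\int_\Omega M(x)\nabla u_m^{\gamma+1}\cdot\nabla v \;\leq\; \int_\Omega f\,v,$$
the first integral being nonnegative by ellipticity and $v\geq 0$. The weak convergence from (i) lets me pass to the limit $m\to\infty$ in the second integral, and truncating $v$ by $T_k(v)$ and letting $k\to\infty$ extends the resulting inequality to all nonnegative $v\in W^{1,2}_0(\Omega)$, which is \eqref{newform}.

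For (iii), set $w=u^{\gamma+1}/(\gamma+1)\in W^{1,2}_0(\Omega)$, nonnegative, and by (ii) a subsolution of $-\mathrm{div}(M(x)\nabla w)=f$. Testing \eqref{newform} with $G_k(w)\in W^{1,2}_0(\Omega)$ yields
$$\alpha\int_\Omega|\nabla G_k(w)|^2 \;\leq\; \|f\|_{L^\infty(\Omega)}\int_{\{w>k\}}G_k(w);$$
combining the Sobolev embedding with H\"older's inequality produces the classical decay estimate for $|\{w>h\}|$ in terms of $|\{w>k\}|^{2^*-1}$, and the Stampacchia iteration (as in \cite{sta}) delivers $\|w\|_{L^\infty(\Omega)}\leq C\|f\|_{L^\infty(\Omega)}$ with $C=C(N,|\Omega|,\alpha)$ independent of $\gamma$, whence \eqref{stimalinfty}. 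The only real subtlety in the plan is to verify that the products $u_m^{2\gamma+1}$ and $u_m^\gamma v$ legitimately belong to $W^{1,2}_0(\Omega)$ so that they are admissible test functions in \eqref{pnm}; this is routine here, since $\gamma>1$ and $u_m\in W^{1,2}_0(\Omega)\cap L^\infty(\Omega)$ imply that the powers and their products with bounded $W^{1,2}_0$-functions are bounded Sobolev functions vanishing on $\partial\Omega$. The remaining effort is bookkeeping on the dependence of constants, which is exactly what produces the explicit $\gamma$-dependence in \eqref{stimalinfty}.
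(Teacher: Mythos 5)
Your proposal is correct and follows essentially the same strategy as the paper: you obtain boundedness of $\{u_m^{\gamma+1}\}$ in $W^{1,2}_0(\Omega)$ by testing the approximate problems with a power of $u_m$ (you use $u_m^{2\gamma+1}$, the paper instead deduces it from the already-known bound on $u_m^{(\gamma+1)/2}$ combined with the uniform $L^\infty$ bound, but both are immediate), you test with $u_m^\gamma v$ and drop the nonnegative term to get (ii), and you run Stampacchia's iteration on $G_k\bigl(u^{\gamma+1}/(\gamma+1)\bigr)$ for (iii). The only cosmetic difference is that the paper first proves \eqref{newform} for $\varphi \in C^1_0(\Omega)$ and then extends by density, whereas you work with bounded $W^{1,2}_0$ test functions directly and pass to the general case by truncation; both are standard and equivalent here.
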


\begin{proof}[{\sl Proof.}]
We begin by observing that, using the boundedness in $\elle\infty$ of the sequence $\um$ of solutions of \eqref{pnm}, and the boundedness of $\um^{\frac{\gamma+1}{2}}$ in $\w$, the sequence $\um^{p}$ is bounded in $\w$ for every $p \geq \frac{\gamma+1}{2}$. In particular, $\{\um^{\gamma+1}\}$ is bounded in $\w$. This yields that $u^{\gamma+1}$ belongs to $\w$ as well; i.e., i) is proved.

We now fix a positive $\vp$ in $C^{1}_{0}(\Omega)$ and take $\um^{\gamma}\,\vp$ as test function in \eqref{pnm}. We obtain
$$
\gamma\io M(x)\D\um \cdot \D\um \, \um^{\gamma-1}\vp
+
\io M(x) \D\um \cdot \D\vp \,\um^{\gamma}
\leq
\io f\,\vp.
$$
Dropping the first term (which is positive), we obtain
$$
\io M(x) \D \Big(\frac{\um^{\gamma+1}}{\gamma+1}\Big) \cdot \D\vp
\leq
\io f\,\vp\,.
$$
Letting $m$ tend to infinity, and using the boundedness of $\um^{\gamma+1}$ in $\w$, we obtain
$$
\io M(x) \D \Big(\frac{u^{\gamma+1}}{\gamma+1}\Big) \cdot \D\vp
\leq
\io f\,\vp\,,
\qquad
\forall \vp \in C^{1}_{0}(\Omega)\,,\ \vp \geq 0\,.
$$
Since $u^{\gamma+1}$ belongs to $\w$, we obtain by density
$$
\io M(x) \D \Big(\frac{u^{\gamma+1}}{\gamma+1}\Big) \cdot \D v
\leq
\io f\,v\,,
\qquad
\forall v \in \w\,,\ v \geq 0\,,
$$
which is \eqref{newform}.
We now choose 
$$
v = G_{k}\Big(\frac{u^{\gamma+1}}{\gamma+1}\Big)\,,
$$
as test function in \eqref{newform} (recall that $u \geq 0$, so that $v \geq 0$ as well). We obtain, setting $A_{\gamma}(k) = \{ u^{\gamma+1} \geq (\gamma+1)\,k\} = \{v \geq 0\}$,
$$
\int_{A_{\gamma}(k)}\,M(x)\D \Big(\frac{u^{\gamma+1}}{\gamma+1}\Big) \cdot \D G_{k}\Big(\frac{u^{\gamma+1}}{\gamma+1}\Big)
\leq
\int_{A_{\gamma}(k)}\,f\,G_{k}\Big(\frac{u^{\gamma+1}}{\gamma+1}\Big)\,.
$$
Recalling \eqref{albe} we therefore have
$$
\alpha\int_{A_{\gamma}(k)}\,\Big| \D G_{k}\Big(\frac{u^{\gamma+1}}{\gamma+1}\Big)\Big|^{2}
\leq
\int_{A_{\gamma}(k)}\,f\,G_{k}\Big(\frac{u^{\gamma+1}}{\gamma+1}\Big)\,.
$$
From this inequality, reasoning once again as in \cite{sta}, we obtain that there exists $C > 0$ such that
$$
\Big\|\frac{u^{\gamma+1}}{\gamma+1}\Big\|_{\elle\infty}
\leq
C\,\norma{f}{\elle\infty}\,,
$$
which then yields \eqref{stimalinfty}.
\end{proof}

\begin{remark}\label{ben}\sl
We observe that if we also assume that $\omega = \{f > 0\}$ is compactly contained in $\Omega$ in Theorem \ref{daBO}, then $u$ belongs to $W^{1,2}_0(\Omega)$ and $\dis \frac{f}{u^\gamma}$ belongs to $L^1(\Omega)$. As a matter of fact, taking $u_m$ as test function in \eqref{pnm}, we have
$$
\alpha\io |\D u_m|^2 \leq \int_{\Omega}\,\frac{f \,\um}{(\um + \frac1m)^{\gamma}} \leq \frac{\norma{f}{\elle\infty}}{c_{\omega,\gamma}^{\gamma-1}}\,,
$$
so that $u$ belongs to $\w$. Moreover, using the Lebesgue theorem and that $u_m \geq c_{\omega,\gamma}$, we deduce that $\dis \frac{f}{u_m^\gamma}$ strongly converges to $\dis \frac{f}{u^\gamma}$ in $L^1(\Omega)$. As a consequence we can extend the class of test functions for \eqref{defsol} to $\w$.
\end{remark}

\begin{remark}\label{bocas}\sl
Under the assumptions of Remark \ref{ben}, thanks to the results contained in \cite{boca}, it follows that $u$ is the unique weak solution of \eqref{pbo}.
\end{remark}

From now on, $\gamma = n$, and we will denote by $\un$ the solution of \eqref{pbn}; therefore, by the results of Theorem \ref{otherprop}, we have that $\un^{n+1}$ belongs to $\w \cap \elle\infty$, and that
$$
\norma{\un}{\elle\infty} \leq (C(n+1)\norma{f}{\elle\infty})^{\frac{1}{n+1}}\,,
$$
which in particular implies that
\begin{equation}\label{fromabove}
\limsup_{n \to +\infty}\,\norma{\un}{\elle\infty} \leq 1\,.
\end{equation}

We now consider the estimates from below on the sequence $\{\un\}$. We first need to enunciate two technical lemmas that we will use during the proof of these estimates.
\begin{lemma}\label{stamp}\sl
Let $m(j,r):[0,+\infty)\times [0,R_0)\to [0,+\infty)$ be a function such that $m(\cdot,r)$ is nonincreasing and $m(j,\cdot)$ is nondecreasing. Moreover, suppose that there exist $k_0\geq 0$, $C, \nu, \delta > 0$ and $\mu>1$ satisfying
$$
m(j,r)\leq C\frac{m(k,R)^\mu}{(j-k)^\nu (R-r)^\delta} \qquad \forall \,\, j>k \geq k_0, \,\,0 \leq r<R<R_0. 
$$
Then, for every $0<\sigma<1$, there exists $d>0$ such that
$$
m(k_0+d,(1-\sigma)R_0)=0,
$$
where $\dis d^\nu=\frac{2^{(\nu+\delta)\frac{\mu}{\mu-1}}Cm(k_0,R_0)^{\mu-1}}{\sigma^\delta R_0^\delta}$.
\end{lemma}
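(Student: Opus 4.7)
The plan is a classical Stampacchia iteration on dyadic sequences of levels and radii. I would introduce
$$k_h \,=\, k_0 + d\,(1 - 2^{-h})\,,\qquad r_h \,=\, (1-\sigma)R_0 + \sigma R_0\,2^{-h}\,,$$
so that $k_0, r_0$ reduce to $k_0, R_0$ while $k_h \nearrow k_0+d$ and $r_h \searrow (1-\sigma)R_0$ as $h\to\infty$. Since $k_{h+1}-k_h = d\,2^{-(h+1)}$ and $r_h - r_{h+1} = \sigma R_0\,2^{-(h+1)}$, applying the hypothesis to the pairs $(k_{h+1},r_{h+1})$ and $(k_h,r_h)$ and setting $a_h := m(k_h,r_h)$ gives the recursion
$$a_{h+1} \,\leq\, \frac{C\,2^{(\nu+\delta)(h+1)}}{d^\nu\,\sigma^\delta\,R_0^\delta}\,a_h^\mu\,.$$

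The second step is to prove by induction on $h$ that $a_h \leq a_0\,q^h$ with $q := 2^{-(\nu+\delta)/(\mu-1)} \in (0,1)$. Substituting the ansatz into the recursion and collecting powers of $2$, the induction step reduces to the single algebraic inequality
$$q \,\geq\, \frac{C\,2^{\nu+\delta}}{d^\nu\,\sigma^\delta\,R_0^\delta}\,a_0^{\mu-1}\,,$$
which, after rearrangement, becomes $d^\nu \geq 2^{(\nu+\delta)\mu/(\mu-1)}\,C\,m(k_0,R_0)^{\mu-1}/(\sigma^\delta R_0^\delta)$. The prescribed choice of $d$ in the statement is exactly the equality case, so the induction closes. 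The requirement $\mu>1$ is essential here: it is what makes $q<1$ and what allows the super-linear recursion to absorb the geometric growth factor $2^{(\nu+\delta)h}$.

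Finally, letting $h\to\infty$ gives $a_h\to 0$. Using that $m$ is nonincreasing in the first variable and nondecreasing in the second, for every $h$ one has
$$m\bigl(k_0+d,\,(1-\sigma)R_0\bigr) \,\leq\, m(k_h,r_h) \,=\, a_h\,,$$
and passing to the limit yields $m(k_0+d,(1-\sigma)R_0)=0$. I expect the only real obstacle to be the bookkeeping of exponents, i.e.\ verifying that the prescribed value of $d$ is exactly the threshold that makes the induction propagate; this is a careful but routine algebraic manipulation. The argument is standard and essentially follows Stampacchia's classical iteration lemma (see \cite{sta}), the only adaptation being the presence of the second variable $r$, handled by its own geometric decomposition parallel to that of the levels $k_h$.
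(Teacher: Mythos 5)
Your proof is correct and is the standard Stampacchia dyadic iteration; the paper itself does not give an argument but simply defers to \cite{STA2}, which is precisely the source for this lemma, so your reconstruction matches the intended proof. The exponent bookkeeping checks out: the induction step reduces to $d^{\nu}\ge C\,2^{(\nu+\delta)\mu/(\mu-1)}m(k_0,R_0)^{\mu-1}/(\sigma^{\delta}R_0^{\delta})$, which the prescribed $d$ satisfies with equality, and the monotonicity of $m$ in both variables correctly transports the conclusion $a_h\to 0$ to the target point $(k_0+d,(1-\sigma)R_0)$. (One cosmetic remark, inherited from the lemma's statement rather than a flaw in your argument: $r_0=R_0$ sits on the closed boundary of the domain $[0,R_0)$, so $m(k_0,R_0)$ should be read as a sup or limit; this does not affect the iteration.)
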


\begin{proof}[{\sl Proof.}]
See \cite{STA2}.
\end{proof}

\begin{lemma}\label{leo}\sl
Let $g:[0,+\infty)\to[0,+\infty)$ be a continuous and increasing function, with $g(0)=0$, such that 
$$
t\in(0,+\infty)\mapsto\frac{g(t)}{t} \text{ is increasing and } \int^{+\infty}\frac{1}{\sqrt{tg(t)}} < +\infty.
$$
Then, for any $C>0$ and $\delta\geq 0$, there exists a function $\vp:[0,1]\to[0,1]$ depending on $g,C,\delta$ with $\vp\in C^1([0,1])$, $\sqrt{\vp}\in C^1([0,1])$, $\vp(0)=\vp'(0)=0$, $\vp(1)=1$, $\vp(\sigma)>0$ for every $\sigma>0$ and satisfying 
$$
t^{\delta+1}\frac{\vp'(\sigma)^2}{\vp(\sigma)}\leq \frac{1}{C}t^\delta g(t)\vp(\sigma)+1, \qquad \forall \,\, 0\leq\sigma \leq 1,\,\, t\geq 0.
$$
\end{lemma}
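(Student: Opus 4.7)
The plan is to construct $\vp$ by first simplifying through the substitution $\psi=\sqrt{\vp}$ and then solving a first-order ODE whose right-hand side is dictated by the ``saturation'' of the inequality. Since $\vp'^{2}/\vp = 4\psi'^{2}$, the target inequality becomes
\[
4\,t^{\delta+1}\psi'(\sigma)^{2}\,\leq\,\frac{t^{\delta}g(t)}{C}\,\psi(\sigma)^{2}+1,\qquad \forall\,\sigma\in[0,1],\ \forall\,t\geq 0,
\]
with boundary data $\psi(0)=0$, $\psi(1)=1$. The regularity requirements $\vp\in C^{1}([0,1])$, $\sqrt{\vp}\in C^{1}([0,1])$ and $\vp(0)=\vp'(0)=0$ all reduce to $\psi\in C^{1}([0,1])$ with $\psi(0)=0$ (from which $\vp'(0)=2\psi(0)\psi'(0)=0$ follows), and positivity $\vp>0$ on $(0,1]$ will be automatic from the ODE construction.

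Next I would analyse, for each fixed $\sigma$, the $t$-dependence of the defect $D_{\sigma}(t):=4\,t^{\delta+1}\psi'(\sigma)^{2}-t^{\delta}g(t)\psi(\sigma)^{2}/C$. One has $D_{\sigma}(0)=0$, and $D_{\sigma}(t)\to -\infty$ as $t\to+\infty$ because the assumption that $g(t)/t$ is increasing, together with the integrability of $1/\sqrt{tg(t)}$ at infinity, forces $g(t)/t\to+\infty$. Hence $D_{\sigma}$ attains its maximum at a unique $t_{\ast}=t_{\ast}(\sigma)>0$ satisfying
\[
4(\delta+1)\,t_{\ast}\,\psi'(\sigma)^{2}\,=\,\frac{\delta g(t_{\ast})+t_{\ast}g'(t_{\ast})}{C}\,\psi(\sigma)^{2},
\]
and the inequality for all $t\geq 0$ is equivalent to $D_{\sigma}(t_{\ast})\leq 1$. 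Imposing saturation $D_{\sigma}(t_{\ast})=1$ together with the critical-point identity yields two relations between $\psi$, $\psi'$ and $t_{\ast}$; eliminating $t_{\ast}$ produces a first-order ODE $\psi'(\sigma)=F(\psi(\sigma))$ in which $F$ is an explicit function of $g$, $C$ and $\delta$ with $F(0)=0$ and $F(\tau)>0$ for $\tau>0$.

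I would then solve this ODE implicitly as $\sigma(\psi)=\int_{0}^{\psi}\frac{d\tau}{F(\tau)}$ and check that $\sigma(1)<+\infty$. The change of variable $\tau\mapsto t_{\ast}(\tau)$, obtained from the two saturation relations, transforms $\int_{0}^{1}\frac{d\tau}{F(\tau)}$ into a constant multiple of $\int_{t_{0}}^{+\infty}\frac{dt}{\sqrt{tg(t)}}$ for some $t_{0}>0$; the integrability hypothesis thus guarantees finiteness of this integral, so after an appropriate normalization we obtain $\psi(1)=1$ exactly. Since $F(0)=0$ we have $\psi'(0)=0$ and the regularity $\psi\in C^{1}([0,1])$ follows. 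Finally, for $t\neq t_{\ast}$ the inequality follows from the maximum property $D_{\sigma}(t)\leq D_{\sigma}(t_{\ast})\leq 1$.

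The main obstacle is the passage in Step 3 from the saturation ODE to an explicit integrability identity: one must verify that the change of variables indeed produces precisely the integrand $1/\sqrt{tg(t)}$ (up to harmless factors depending on $C$ and $\delta$) so that the hypothesis applies. A secondary point is verifying that $t_{\ast}$ is truly a global maximum of $D_{\sigma}$ on $[0,+\infty)$, which uses $g(t)/t$ increasing (equivalently $tg'(t)\geq g(t)$) to sign the second derivative and rule out other critical points.
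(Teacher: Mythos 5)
The paper does not supply its own proof of this lemma---it simply cites Leoni \cite{L}, Lemma~1.1---so there is no in-house argument to compare your plan against; I therefore evaluate your sketch on its own terms. The direction is reasonable: the reduction $\psi=\sqrt{\vp}$ with $\vp'^{2}/\vp=4\psi'^{2}$ is correct, and so is the observation that $g(t)/t$ increasing together with $\int^{+\infty}1/\sqrt{tg(t)}<\infty$ forces $g(t)/t\to+\infty$. But as written the plan has two genuine gaps. First, your critical-point analysis and the elimination of $t_{*}$ to obtain the ODE $\psi'=F(\psi)$ require $g\in C^{1}$, and once you try to carry out the change of variables $\tau\mapsto t_{*}(\tau)$ you must differentiate that eliminated relation again, which needs $g''$; the hypotheses only give $g$ continuous and increasing, so these derivatives need not exist. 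Second, the step you yourself call the ``main obstacle''---showing that $\int_{0}^{1}d\tau/F(\tau)$ reduces to something controlled by $\int^{+\infty}dt/\sqrt{tg(t)}$---is precisely the content of the lemma, and your sketch stops short of it; the assertion that the substitution produces ``a constant multiple'' of that integral is not true for general $g$, and you would also have to justify that $t_{*}(\sigma)$ is a single-valued, monotone function of $\psi(\sigma)$, which does not follow merely from $g(t)/t$ increasing.

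A cleaner route avoids both issues by not saturating the inequality at an interior maximizer. For $\psi>0$ define $T=T(\psi)>0$ by $T^{\delta}g(T)=C/\psi^{2}$ (well-posed because $t\mapsto t^{\delta}g(t)$ is continuous, increasing and onto $[0,\infty)$), and try $\psi'^{2}\sim 1/T^{\delta+1}$. Then for $t\geq T$ the monotonicity of $g(t)/t$ gives $g(t)/t\geq g(T)/T$, which after a short computation yields $4t^{\delta+1}\psi'^{2}\leq t^{\delta}g(t)\psi^{2}/C$, while for $t<T$ one has $4t^{\delta+1}\psi'^{2}=(t/T)^{\delta+1}\leq 1$; no derivative of $g$ is ever taken. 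The finiteness needed for the $\sigma$-parametrization comes from an integration by parts: using $\psi=\sqrt{C}\,(T^{\delta}g(T))^{-1/2}$ one gets
$$
\int_{0}^{1}T(s)^{\frac{\delta+1}{2}}\,ds \;=\; T(1)^{\frac{\delta+1}{2}}\;+\;\frac{(\delta+1)\sqrt{C}}{2}\int_{T(1)}^{+\infty}\frac{dT}{\sqrt{T\,g(T)}}\;<\;+\infty,
$$
which is exactly where your Keller--Osserman hypothesis enters. What then remains---and what your sketch also glosses over---is the normalization so that $\psi$ runs from $0$ to $1$ precisely over $[0,1]$ while keeping the inequality with the given $C$; this requires a careful choice of the scaling parameter (it cannot simply be absorbed, because speeding up $\sigma$ multiplies $\psi'^{2}$ and can spoil the estimate for $t<T$). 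Until the change-of-variables identity and the normalization are actually carried out, the plan is a plausible heuristic rather than a proof.
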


\begin{proof}[{\sl Proof.}]
See \cite{L}, Lemma 1.1.
\end{proof}

We are ready to prove the estimates from below.

\begin{theorem}\label{dasotto}\sl
Let $\un$ be the solution of \eqref{pbn} given by Theorem \ref{daBO}, and let $\omega \subset\subset \Omega$ be such that for every $\omega' \subset \subset \omega$ there exists $c_{\omega'} > 0$ satisfying $f \geq c_{\omega'}$ in $\omega'$. Then there exists $M_{\omega'} > 0$ such that
\begin{equation}\label{frombelow}
\un \geq (n+1)^{\frac{1}{n+1}}{\rm e}^{-\frac{M_{\omega'}}{n+1}}
\qquad
\mbox{in $\omega'$.}
\end{equation}
\end{theorem}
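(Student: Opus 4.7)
The plan is to compare $\un$ on an intermediate subdomain $\omega_1$ (satisfying $\omega'\subset\subset\omega_1\subset\subset\omega$, with $c_1:=c_{\omega_1}>0$ so that $f\geq c_1$ a.e.\ in $\omega_1$) to an explicit local sub-solution $\psi_n$ of the simpler equation $-\operatorname{div}(M\nabla \psi)=c_1/\psi^n$. Pick a function $\bar u\in C^2(\omega_1)\cap C(\overline{\omega_1})$ with $\bar u>0$ in $\omega_1$, $\bar u=0$ on $\partial\omega_1$, and such that $M\nabla\bar u\cdot\nabla\bar u$ and $\operatorname{div}(M\nabla\bar u)$ are bounded on $\omega_1$ (for instance a smoothing of $\operatorname{dist}(\cdot,\partial\omega_1)$). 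Set
\[
K_0:=\|M\nabla\bar u\!\cdot\!\nabla\bar u\|_{L^\infty(\omega_1)}+\|\bar u\operatorname{div}(M\nabla\bar u)\|_{L^\infty(\omega_1)},\qquad \bar u_{\min}:=\min_{\overline{\omega'}}\bar u>0,
\]
and define
\[
\psi_n(x):=A_n\,\bar u(x)^{2/(n+1)},\qquad A_n^{n+1}:=\frac{c_1(n+1)}{2K_0}.
\]
A direct chain-rule calculation gives, for $n\geq 1$,
\[
-\operatorname{div}(M\nabla\psi_n)=\frac{2A_n}{n+1}\,\bar u^{-2n/(n+1)}\Big[\tfrac{n-1}{n+1}\,M\nabla\bar u\!\cdot\!\nabla\bar u\,-\,\bar u\operatorname{div}(M\nabla\bar u)\Big];
\]
since the bracket is bounded in absolute value by $K_0$, the choice of $A_n$ forces $-\operatorname{div}(M\nabla\psi_n)\leq c_1/\psi_n^n$ in $\omega_1$, so $\psi_n$ is a sub-solution vanishing on $\partial\omega_1$.

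To obtain $\un\geq\psi_n$ in $\omega_1$ we use the classical comparison: Theorem~\ref{daBO} provides $\un\geq c_n>0$ on $\overline{\omega_1}$ for some (possibly tiny, $n$-dependent) $c_n$, so the contact set $\{\psi_n>\un\}$ is contained in $\{\bar u\geq(c_n/A_n)^{(n+1)/2}\}\subset\subset\omega_1$, on which $\psi_n$ is smooth. Testing the inequality $-\operatorname{div}(M\nabla\psi_n)\leq c_1/\psi_n^n$ and the equation $-\operatorname{div}(M\nabla\un)=f/\un^n\geq c_1/\un^n$ against $(\psi_n-\un)^+\in W^{1,2}_0(\omega_1)$, subtracting, and using the strict monotonicity of $t\mapsto -1/t^n$ on $(0,\infty)$ together with the coercivity of $M$, forces $(\psi_n-\un)^+\equiv 0$. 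Evaluating at $x\in\omega'$ and substituting $A_n$,
\[
\un(x)\geq A_n\,\bar u_{\min}^{2/(n+1)}=\Big(\frac{c_1(n+1)\,\bar u_{\min}^2}{2K_0}\Big)^{\!1/(n+1)}=(n+1)^{1/(n+1)}\,\mathrm{e}^{-M_{\omega'}/(n+1)},
\]
with $M_{\omega'}:=\max\{0,\log(2K_0/(c_1\bar u_{\min}^2))\}$ independent of $n$, which is the bound \eqref{frombelow}. The main technical issue is that, for merely bounded measurable $M$, the quantities $M\nabla\bar u\!\cdot\!\nabla\bar u$ and $\operatorname{div}(M\nabla\bar u)$ need not be in $L^\infty(\omega_1)$; one circumvents this either by a preliminary regularisation of $M$ (and passing to the limit after the argument), or by replacing the explicit ansatz above by a sub-solution constructed abstractly via Lemma~\ref{leo}, whose structural inequality is designed precisely for singular settings of this kind.
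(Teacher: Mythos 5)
Your barrier construction is clean and the comparison step is sound: $(\psi_n-\un)^+$ has compact support in $\omega_1$ because $\un$ is locally bounded away from zero while $\psi_n$ vanishes on $\partial\omega_1$, so the Stampacchia-type comparison closes, and the arithmetic $A_n\bar u_{\min}^{2/(n+1)}=(n+1)^{1/(n+1)}\mathrm{e}^{-M_{\omega'}/(n+1)}$ gives exactly \eqref{frombelow}. However, the escape hatch you propose for rough $M$ does not work, and this is a genuine gap in the regime the theorem actually covers (merely bounded measurable $M$ satisfying \eqref{albe}).

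The issue is the constant $K_0$, which contains $\|\bar u\operatorname{div}(M\nabla\bar u)\|_{L^\infty(\omega_1)}$, hence first derivatives of $M$. If you mollify $M$ to $M_\eps$, the solutions $u_n^\eps$ do converge, but $K_0^\eps$ involves $\|\nabla M_\eps\|_\infty$, which blows up as $\eps\to 0$ unless $M$ is Lipschitz; so $M_{\omega'}^\eps\to\infty$ and the bound \eqref{frombelow} degenerates in the limit. Adapting $\bar u$ to $M_\eps$ (e.g.\ taking $\bar u^\eps$ solving $-\operatorname{div}(M_\eps\nabla\bar u^\eps)=1$) fixes the $\operatorname{div}$ term but trades it for $\|\nabla\bar u^\eps\|_\infty$ in the quadratic term, and De Giorgi--Nash--Moser gives only uniform $C^{0,\alpha}$, not uniform $C^1$, for a sequence with merely equibounded $L^\infty$ coefficients. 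So neither version of the regularisation produces an $n$- and $\eps$-independent $M_{\omega'}$.

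The paper avoids differentiating $M$ entirely. It sets $z_n=-\log(\un^{n+1}/(n+1))$, uses Lemma~\ref{leo} not to build a sub-solution but to build a cutoff $\xi=\sqrt{\vp(\eta)}$ with a structural inequality tailored to the nonlinearity $\mathrm{e}^t-1$, tests \eqref{defsol} with $G_k(z_n^+)\,\xi^2/\un$, and then runs a Stampacchia iteration (Lemma~\ref{stamp}) on $m(k,r)=|\{z_n^+\geq k\}\cap\omega_r|$ to get an $n$-independent $L^\infty$ bound on $z_n^+$ in $\omega'$, which is precisely \eqref{frombelow}. Your second suggested fix ("a sub-solution constructed abstractly via Lemma~\ref{leo}") gestures at the right lemma but mischaracterises its role; the actual argument is a level-set iteration, not a barrier comparison. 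As written, your proof is complete for, say, $C^1$ coefficients but does not cover the stated hypotheses.
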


\begin{proof}[{\sl Proof.}]
Let $\omega''\subset\subset\omega' \subset\subset \omega$, by the assumptions we have that
\begin{equation}\label{mo}
m_{\omega'} = \inf_{x \in \omega'}\,f(x) > 0\,.
\end{equation}
Let $\eta$ in $C^{1}_{0}(\Omega)$ be such that
$$
\eta(x) = 
\begin{cases}
1 & \mbox{in $\omega''$,} \\
0 & \mbox{in $\Omega \setminus \overline{\omega'}$.}
\end{cases}
$$
We consider the function $\vp\in C^1([0,1])$ given by Lemma \ref{leo}, in correspondence of $g(t)=\rm{e}^t-1$, $\delta=1$ and of an arbitrary constant $C>0$. Define 
$$
\xi(x)=\sqrt{\vp(\eta(x))}\in C^{1}_{0}(\Omega)\,,
$$
$$
\zn = -\log \Big(\frac{\un^{n+1}}{n+1} \Big)\,,
$$
and, for $k > 0$,
$$
\vn = \frac{G_{k}(\zn^{+})}{\un}\,.
$$
Note that $\vn \geq 0$ is well defined, since where $\zn^{+} > k$ one has $\un \neq 0$. We have
\begin{equation}\label{gradxi}
\D\xi=\frac{\vp'(\eta)}{2\sqrt{\vp(\eta)}}\,\D\eta\,.
\end{equation}
Since 
$$
\D\zn = -\frac{(n+1)\D\un}{\un}
$$
we obtain
$$
\D \vn
=
-
\frac{\D\un}{\un^{2}}\,\,G_{k}(\zn^{+})
+
\frac{1}{\un}\,\D\zn\,\car{A_{n}(k)}
=
-
\frac{\D\un}{\un^{2}}\,\,G_{k}(\zn^{+})
-
\frac{(n+1)\D\un}{\un^{2}}\car{A_{n}(k)}\,,
$$
where $A_{n}(k) = \{ \zn^{+} \geq k\} = \{G_{k}(\zn^{+}) \neq 0\}$. Therefore, since $\un$ belongs to $W^{1,2}_{{\rm loc}}(\Omega) \cap \elle\infty$ and it is locally positive, $\zn$ and $\vn$ belong to $W^{1,2}_{{\rm loc}}(\Omega)$. Consequently the positive function $\vn\,\xi^{2}$ belongs to $\w$, has compact support and can be chosen as test function in \eqref{defsol}, with $\gamma=n$, to obtain
$$
\begin{array}{l}
\dis
-
\int_{A_{n}(k)} M(x)\D\un \cdot \D\un\,\frac{G_{k}(\zn^{+})\,\xi^{2}}{\un^{2}}
-
\int_{A_{n}(k)} M(x)\D\un \cdot \D\un\,\frac{(n+1)\,\xi^{2}}{\un^{2}}
\\
\dis
\qquad
+
2\int_{A_{n}(k)} M(x)\D\un \cdot \D\xi \,\frac{G_{k}(\zn^{+})\,\xi}{\un}
=
\int_{A_{n}(k)}\,\frac{f\,G_{k}(\zn^{+})\,\xi^{2}}{\un^{n+1}}\,.
\end{array}
$$
Since
$$
\frac{n+1}{\un^{n+1}} = {\rm e}^{\zn}\,,
$$
the previous identity can be rewritten as
$$
\begin{array}{l}
\dis
-
\frac{1}{n+1}\int_{A_{n}(k)} M(x)\D\zn \cdot \D\zn \,G_{k}(\zn^{+})\,\xi^{2}
-
\int_{A_{n}(k)}M(x)\D\zn \cdot \D\zn\,\xi^{2}
\\
\dis
\qquad
-
2 \int_{A_{n}(k)} M(x)\D\zn \cdot \D\xi \,G_{k}(\zn^{+})\,\xi
=
\int_{A_{n}(k)}\,f\,{\rm e}^{\zn^{+}}G_{k}(\zn^{+})\,\xi^{2}\,.
\end{array}
$$
Since the first term is negative, we have, using \eqref{albe} and \eqref{mo}, as well as the fact that $G_{k}(s^{+}) \leq s^{+}$, that
$$
\alpha\int_{A_{n}(k)}|\D\zn|^{2}\,\xi^{2}
+
m_{\omega'}\int_{A_{n}(k)} {\rm e}^{G_{k}(\zn^{+})} G_{k}(\zn^{+})\,\xi^{2}
\leq
2\beta \int_{A_{n}(k)} |\D\zn||\D\xi| G_{k}(\zn^{+})\, \xi\,.
$$
Using Young's inequality in the right hand side, we have
$$
2\beta \int_{A_{n}(k)} |\D\zn||\D\xi| G_{k}(\zn^{+})\,\xi
\leq
\frac{\alpha}{2}\int_{A_{n}(k)}|\D\zn|^{2}\,\xi^{2}
+
\frac{2\beta^2}{\alpha}\,\int_{A_{n}(k)} |\D\xi|^{2}\,G_{k}(\zn^{+})^{2}\,,
$$
so that we have
$$
\frac{\alpha}{2}\int_{A_{n}(k)}|\D G_k(\zn^{+})|^{2}\,\xi^{2}
+
m_{\omega'}\int_{A_{n}(k)} {\rm e}^{G_{k}(\zn^{+})} G_{k}(\zn^{+})\,\xi^{2}
\leq
\frac{2\beta^2}{\alpha}\,\int_{A_{n}(k)} |\D\xi|^{2}\,G_{k}(\zn^{+})^{2}\,.
$$
Observing that
$$
\frac{\alpha}{4}|\D(G_k(\zn^{+})\xi)|^2\leq \frac{\alpha}{2}|\D G_k(\zn^{+})|^{2}\,\xi^{2}+\frac{\alpha}{2}|\D\xi|^2\,G_k(\zn^{+})^2\,,
$$
we obtain
$$
\frac{\alpha}{4}\int_{A_{n}(k)}|\D(G_k(\zn^{+})\xi)|^{2}
+
m_{\omega'}\int_{A_{n}(k)} {\rm e}^{G_{k}(\zn^{+})} G_{k}(\zn^{+})\,\xi^{2}
\leq
\frac{4\beta^2+\alpha^2}{2\alpha}\,\int_{A_{n}(k)} |\D\xi|^{2}\,G_{k}(\zn^{+})^{2}\,.
$$
Using that $\xi=\sqrt{\vp(\eta)}$ and \eqref{gradxi}, we deduce
\begin{align*}
&\frac{\alpha}{4}\int_{A_{n}(k)}|\D(G_k(\zn^{+})\xi)|^{2}
+
m_{\omega'}\int_{A_{n}(k)} {\rm e}^{G_{k}(\zn^{+})} G_{k}(\zn^{+})\,\vp(\eta)
\\ &\leq
\frac{4\beta^2+\alpha^2}{8\alpha}\norma{\D\eta}{\elle\infty}^2\!\!\!\int_{A_{n}(k)}G_{k}(\zn^{+})^{2}\,\frac{\vp'(\eta)^{2}}{\vp(\eta)}\,.
\end{align*}
Applying Lemma \ref{leo}, with $t=G_k(z_n^+)$, and choosing the constant $C$ as 
$$
C=\frac{4\beta^2+\alpha^2}{4\alpha m_{\omega'}}\,\norma{\D\eta}{\elle\infty}^2\,,
$$ 
we have
\begin{align*}
&\frac{4\beta^2+\alpha^2}{8\alpha}\norma{\D\eta}{\elle\infty}^2\int_{A_{n}(k)}G_{k}(\zn^{+})^{2}\,\frac{\vp'(\eta)^{2}}{\vp(\eta)} \\
&\leq \frac{m_{\omega'}}{2}\int_{A_n(k)}G_k(z_n^+)\,({\rm e}^{G_{k}(\zn^{+})}-1)\,\vp(\eta) + \frac{4\beta^2+\alpha^2}{8\alpha}\norma{\D\eta}{\elle\infty}^2\!\!\!|A_n(k)\,\cap \,\omega'|\,.
\end{align*}
Hence, we obtain
\begin{align*}
&\frac{\alpha}{4}\int_{A_{n}(k)}|\D(G_k(\zn^{+})\xi)|^{2}
+
\frac{m_{\omega'}}{2}\int_{A_{n}(k)} {\rm e}^{G_{k}(\zn^{+})} G_{k}(\zn^{+})\,\vp(\eta) \\
&+
\frac{m_{\omega'}}{2}\, \int_{A_n(k)}G_k(z_n^+)\,\vp(\eta) \leq \frac{4\beta^2+\alpha^2}{8\alpha}\norma{\D\eta}{\elle\infty}^2\!\!\!|A_n(k)\,\cap \,\omega'|\,.
\end{align*}
Dropping the positive terms in the left hand side, we have
\begin{equation*}
\int_{A_{n}(k)}|\D(G_k(\zn^{+})\xi)|^{2} \leq \frac{4\beta^2+\alpha^2}{2\alpha^2}\norma{\D\eta}{\elle\infty}^2\!\!\!|A_n(k)\,\cap \,\omega'|\,.
\end{equation*}
Moreover, denoting with $\mathcal{S}$ the constant given by the Sobolev embedding theorem and recalling that $\xi\equiv 1$ in $\omega''$, we deduce, for $j>k>0$, that
\begin{align*}
&(j-k)^2\,|A_n(j)\,\cap\,\omega''|^\frac{2}{2^*} 
\leq 
\left(\int_{A_n(j)\,\cap\,\omega''}|G_k(z_n^+)|^{2^*}\right)^\frac{2}{2^*} \\
&\leq \left(\int_{A_{n}(k)\,\cap\,\omega'}|G_k(\zn^{+})\xi|^{2^*}\right)^\frac{2}{2^*} 
\leq \mathcal{S}^2\,\frac{4\beta^2+\alpha^2}{2\alpha^2}\norma{\D\eta}{\elle\infty}^2\!\!\!|A_n(k)\,\cap \,\omega'|\,.
\end{align*}
Defining $\dis c_0^\frac{2}{2^*}=\mathcal{S}^2\frac{4\beta^2+\alpha^2}{2\alpha^2}$, we have, for all $\omega''\subset\subset\omega'\subset\subset\omega$, that
\begin{equation}\label{stiminf}
|A_n(j)\,\cap\,\omega''| \leq c_0\frac{\norma{\D\eta}{\elle\infty}^{2^*}\!\!\!|A_n(k)\,\cap \,\omega'|^\frac{2^*}{2}}{(j-k)^{2^*}}\,.
\end{equation} 
Now we consider $\dis R_0=\text{dist}(\omega'',\omega)$. Define 
$$
\omega_r=\{x\in\Omega\,:\,\text{dist}(x,\omega'')<r\}
$$ 
and 
$$
m(k,r)=|A_n(k)\,\cap\,\omega_r|\,,
$$
for every $0<r< R_0$ and $k>0$. Choosing $0\leq r<R < R_0$ and $\eta$ such that $\dis \norma{\D\eta}{\elle\infty}\leq\frac{c_1}{R-r}$ and taking $\omega''=\omega_r$ and $\omega'=\omega_R$ in \eqref{stiminf}, we deduce
\begin{equation*}
m(j,r) \leq c_2\,\frac{m(k,R)^{\frac{2^*}{2}}}{(j-k)^{2^*}(R-r)^{2^*}}\,,
\end{equation*}
where $c_2=c_0\,c_1^{2^*}$. From this inequality it follows, applying Lemma \ref{stamp}, that there exists $M_{\omega'}>0$ (independent on $n$) such that 
$$
\norma{z_n^+}{L^{\infty}(\omega')} \leq M_{\omega'}\,.
$$
Recalling the definition of $\zn$ in terms of $\un$, we therefore have
$$
\un = (n+1)^{\frac{1}{n+1}}{\rm e}^{-\frac{\zn}{n+1}} \geq (n+1)^{\frac{1}{n+1}}{\rm e}^{-\frac{M_{\omega'}}{n+1}}
\quad
\mbox{in $\omega'$,}
$$
which is \eqref{frombelow}.
\end{proof}

We conclude this section with the following remark:

\begin{remark}\label{cons}\sl
As a consequence of estimates \eqref{fromabove} and \eqref{frombelow}, we thus have
$$
\lim_{n \to +\infty}\,\un = 1
\quad
\mbox{uniformly in $\omega'$.}
$$
Repeating this argument for every $\omega'$ contained in $\omega$, we have that $\un$ converges to~1 on $\omega$.
\end{remark}

\section{Proofs of Theorems \ref{main1} and \ref{main2}}
\label{s4}

We start with the proof of Theorem \ref{main1}, in which we recall that $\omega = \{f > 0\}$ is compactly contained in $\Omega$.

\begin{proof}[{\sl Proof of Theorem \ref{main1}}]
We have already proved that
\begin{equation}\label{temp0}
\norma{\un}{\elle\infty} \leq (C(n+1)\norma{f}{\elle\infty})^{\frac{1}{n+1}}\,,
\end{equation}
so that $\un$ is bounded in $\elle\infty$. This implies that there exists $u$ in $\elle\infty$ such that $u_n$ *-weakly converges to $u$ in $\elle\infty$ and, by Remark \ref{cons}, $u\equiv 1$ in $\omega$. We are now going to prove that the right hand side of the equation in \eqref{pbn} is bounded in $\elle1$ uniformly in $n$. As a matter of fact, if $\un$ is the solution of \eqref{pbn}, from Theorem \ref{daBO} and Remark \ref{ben}, it follows that $u_n \in \w$, $u_n \geq c_{\omega,n}>0$ in $\omega$ and $\dis \frac{f}{u_n^n}$ belongs to $\elle\infty$. Then we have, by the results in \cite{sta}, that 
$$
\un(x) = \io G(x,y)\,\frac{f(y)}{\un^{n}(y)}\,dy\,,
\qquad
\forall x \in \Omega\,,
$$
where $G(x,\cdot)$ is the Green function of the linear differential operator defined by the adjoint matrix $M^{*}(x)$ of $M(x)$, i.e., the unique duality solution of
$$
\begin{cases}
-{\rm div}(M^{*}(x)\D G(x,\cdot)) = \delta_{x} & \mbox{in $\Omega$,} \\
\hfill G(x,\cdot) = 0 \hfill & \mbox{on $\partial\Omega$,}
\end{cases}
$$
where $\delta_{x}$ is the Dirac delta concentrated at $x$ in $\Omega$. It is well-known (see for example \cite{LSW}), that for every $\omega' \subset\subset \Omega$ there exists $K > 0$ such that
\begin{equation}\label{dalsw}
G(x,y) \geq \frac{K}{|x-y|^{N-2}}\,,
\qquad
\forall x,\ y \in \omega'\,.
\end{equation}
Fix now $\overline{x}$ in $\Omega \setminus \overline{\omega}$, let $\omega'' \subset\subset \Omega$ be such that $\omega \subset \omega''$ and $\overline{x}$ belongs to $\omega''$, and let $K$ be such that \eqref{dalsw} holds. We then have
\begin{align*}
(C(n+1)\norma{f}{\elle\infty})^{\frac{1}{n+1}} \geq\un(\overline{x}) &= \io G(\overline{x},y)\,\frac{f(y)}{\un^{n}(y)}\,dy \\
&\geq \io \frac{K}{|\overline{x}-y|^{N-2}}\,\frac{f(y)}{\un^{n}(y)}\,dy
\\
&\geq \frac{K}{{\rm{diam}}(\Omega)^{N-2}} \int_{\omega}\frac{f(y)}{\un^{n}(y)}\,dy\,.
\end{align*}
Therefore, there exists $M > 0$ such that
\begin{equation}\label{temp1}
\int_{\omega}\frac{f(x)}{\un^{n}} = \io\frac{f(x)}{\un^{n}} \leq M\,,
\end{equation}
i.e., the right hand side of the equation in \eqref{pbn} is bounded in $\elle1$. Observe now that for every $\omega' \subset\subset \omega$ there exists $M_{\omega'}$ such that
$$
\un(x) \geq (n+1)^{\frac{1}{n+1}}\,{\rm e}^{-\frac{M_{\omega'}}{n+1}},
\quad
\mbox{in $\omega'$.}
$$
Therefore,
$$
\int_{\omega'}\frac{f(x)}{\un^{n}}
\leq
\frac{|\omega'|{\rm e}^{\frac{n M_{\omega'}}{n+1}}\norma{f}{\elle\infty}}{(n+1)^{\frac{n}{n+1}}},
$$
so that
\begin{equation}\label{tozero}
\lim_{n \to +\infty}\,\int_{\omega'}\frac{f(x)}{\un^{n}} = 0,
\end{equation}
i.e., the right hand side converges to zero in $L^{1}_{{\rm loc}}(\omega)$. Let now $\mu$ be the bounded Radon measure such that
$$
\frac{f(x)}{\un^{n}}
\to
\mu,
\quad
\mbox{in the $*$-weak topology of measures.}
$$
Clearly, by the assumption on $f$, $\mu\LL (\Omega \setminus \overline{\omega}) = 0$, and, by \eqref{tozero}, $\mu \LL \omega = 0$, so that $\mu = \mu \LL \partial\omega$. Moreover, by Remark \ref{ben}, we can take $u_n$ as test function in \eqref{pbn} and we obtain, using \eqref{albe}, \eqref{temp0} and \eqref{temp1}, that
$$
\io |\D u_n|^2 \leq \io \frac{f(x)\,u_n}{u_n^n} \leq \norma{u_n}{\elle\infty}\!\!\!\io\frac{f(x)}{u_n^n} \leq C\,,
$$
then $u_n$ weakly converges to $u$ in $\w$ as $n$ tends to infinity. Recalling that, by Remark \ref{ben}, $u_n$ is the (unique) weak solution of \eqref{pbn}, that is 
\begin{equation}\label{testpn}
\io M(x)\D u_n \cdot \D\vp = \io \frac{f\,\vp}{u_n^{n}},
\qquad
\forall \vp \in W^{1,2}_{0}(\Omega)\,,
\end{equation}
we obtain, letting $n$ tend to infinity, that 
\begin{equation}\label{testp}
\io M(x)\D u\cdot \D\vp = \io \vp\,d\mu\,,
\qquad
\forall \vp \in C^1_{0}(\Omega)\,,
\end{equation}
so that $u$ is a distributional solution with finite energy of the limit problem \eqref{pblim0}.
\end{proof}

\begin{remark}\label{dual}\sl
We observe that $u_n$ is also the unique duality solution of \eqref{pbn}, i.e. 
\begin{equation}\label{temp2}
\io u_n\,g=\io \frac{f}{u_n^n}\,v\,, \qquad \forall g\in L^\infty(\Omega)\,,
\end{equation}
where $v\in \w\cap\elle\infty$ is the unique weak solution of
\begin{equation}\label{star}
\begin{cases}
-{\rm div}(M^*(x)\nabla v) = g & \mbox{in $\Omega$,} \\
\hfill v = 0 \hfill & \mbox{on $\partial\Omega$.}
\end{cases}
\end{equation}
This implies, letting $n$ tend to infinity in \eqref{temp2} and using the standard results contained in \cite{sta}, that $u$ is the unique duality solution of \eqref{pblim0}.
\end{remark}

Now we prove Theorem \ref{main2}. Here let us recall that for every $\omega \subset\subset \Omega$ there exists $c_{\omega} > 0$ such that $f \geq c_{\omega}$ in $\omega$ and that $\{\omega_{n}\}$ is an increasing sequence of compactly contained subsets of $\Omega$ such that their union is $\Omega$.

\begin{proof}[{\sl Proof of Theorem \ref{main2}}]
Let $u_n$ be the solution of \eqref{pbn2}. It follows, from the fact that $f(x)\,\chi_{\omega_n}(x)$ has compact support in $\Omega$ and using Remark \ref{ben}, that $u_n$ belongs to $W^{1,2}_{0}(\Omega)$ and $\dis \frac{f(x)\,\chi_{\omega_n}(x)}{u_n^n}$ belongs to $L^1(\Omega)$. Once again as a consequence of Theorem \ref{otherprop} we have that $\{u_n\}$ is bounded in $L^\infty(\Omega)$. Then there exists $u$ in $\elle\infty$ such that $u_n$ *-weakly converges to $u$ in $L^{\infty}(\Omega)$. Moreover, by Remark \ref{cons}, we deduce that $u_n$ uniformly converges to $1$ in $\omega$, for every $\omega\subset\subset \Omega$, hence $u\equiv 1$ in $\Omega$. If we assume that the sequence $\dis \left\{\frac{f(x)\,\chi_{\omega_n}(x)}{u_n^n}\right\}$ is bounded in $L^1(\Omega)$, then it *-weakly converges to $\mu$ in the topology of measure. Repeating the same arguments contained in Remark \ref{dual} we obtain 
$$
\io u\, g=\io v\,d\mu\,, \qquad \forall g \in \elle\infty\,,
$$
where $v$ in $\w$ is the weak solution of \eqref{star}. Then $u$ in $L^\infty(\Omega)$ is the duality solution of \eqref{pblim0}, so that $u$ belongs to $W^{1,1}_{0}(\Omega)$. Since $u\equiv 1$ in $\Omega$, there is a contradiction. Hence, the right hand side of \eqref{pbn2} is not bounded in $L^1(\Omega)$ and there cannot be any limit equation.
\end{proof}

\section{One-dimensional solutions and Proof of Theorem \ref{main3}}
\label{s5}

First we prove a result that makes the link between a distributional solution of \eqref{pbn} and a distributional solution with finite energy of \eqref{parn} rigorous.

\begin{prop}
\label{link}
Let $f$ be a nonnegative function belonging to $\linf$. If $u_n$ is a solution of \eqref{pbn} given by Theorem \ref{daBO}, then $\dis v_n=\frac{u_n^{n+1}}{n+1}$ is a distributional solution of \eqref{parn} with finite energy.
\end{prop}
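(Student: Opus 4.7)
The plan is to derive the equation for $v_n$ by choosing an appropriate test function in the distributional formulation of \eqref{pbn} and simplifying via the chain rule. Since $v_n = u_n^{n+1}/(n+1)$, we have the formal identities $\nabla v_n = u_n^{n}\,\nabla u_n$ and $|\nabla v_n|^{2}/v_n = (n+1)\,u_n^{n-1}\,|\nabla u_n|^2$, so that the algebraic cancellation
\[
-\Delta v_n + \tfrac{n}{n+1}\tfrac{|\nabla v_n|^2}{v_n} \;=\; -u_n^n\,\Delta u_n - n\,u_n^{n-1}|\nabla u_n|^2 + n\,u_n^{n-1}|\nabla u_n|^2 \;=\; u_n^n\cdot\tfrac{f}{u_n^n} \;=\; f
\]
is what drives the proof at the formal level.

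First I will verify that $v_n$ has the regularity required to be a distributional finite-energy solution of \eqref{parn}. From Theorem \ref{otherprop}(i), $u_n^{n+1}\in\w$, hence $v_n\in\w$, yielding both finite energy and the homogeneous boundary condition. For the singular term, since $u_n^{(n+1)/2}\in\w$ by Theorem \ref{daBO}, the identity
\[
\frac{|\nabla v_n|^2}{v_n} \;=\; (n+1)\,u_n^{n-1}|\nabla u_n|^2 \;=\; \frac{4}{n+1}\,\bigl|\nabla u_n^{(n+1)/2}\bigr|^2
\]
shows that $|\nabla v_n|^2/v_n\in\luno$. Positivity $v_n>0$ a.e.\ is immediate from the local positivity of $u_n$ granted by the distributional solution notion.

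Next, fix $\psi\in C^1_c(\Omega)$ with support $K\subset\subset\Omega$ and take $\vp=u_n^n\,\psi$ as test function in the extended formulation \eqref{defsol} of Theorem \ref{daBO}. This is admissible: $u_n\in W^{1,2}_{\mathrm{loc}}(\Omega)\cap\linf$ with $u_n\ge c_{K,n}>0$ on $K$, so $u_n^n\,\psi\in\w$ has compact support in $\Omega$. Expanding $\nabla\vp = n\,u_n^{n-1}\psi\,\nabla u_n + u_n^n\,\nabla\psi$ in \eqref{defsol} (with $\gamma=n$) and noting that on the right-hand side $u_n^n$ cancels the singular factor $u_n^{-n}$, we obtain
\[
n\int_\Omega M(x)\nabla u_n\cdot\nabla u_n\,u_n^{n-1}\psi \;+\; \int_\Omega M(x)\nabla u_n\cdot\nabla\psi\,u_n^n \;=\; \int_\Omega f\,\psi.
\]
Rewriting the two left-hand integrals via $\nabla v_n=u_n^n\nabla u_n$ and $(n+1)u_n^{n-1}|\nabla u_n|^2=|\nabla v_n|^2/v_n$ yields, in the case $M\equiv I$ considered in \eqref{parn},
\[
\int_\Omega \nabla v_n\cdot\nabla\psi \;+\; \frac{n}{n+1}\int_\Omega \frac{|\nabla v_n|^2}{v_n}\,\psi \;=\; \int_\Omega f\,\psi, \qquad \forall\,\psi\in C^1_c(\Omega),
\]
which is exactly the distributional formulation of \eqref{parn}.

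The only genuine technical subtlety is justifying $u_n^n\,\psi\in\w$ with compact support in \eqref{defsol}, which rests on the $L^\infty$ bound \eqref{stimalinfty}, the local $W^{1,2}$ regularity, and the local positivity of $u_n$; everything else is the chain-rule computation above. I note in passing that the same argument applies verbatim with $-\Delta$ replaced by $-\mathrm{div}(M(x)\nabla\cdot)$, so the proposition holds in the general matrix setting as well.
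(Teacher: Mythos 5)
Your proof is correct and follows essentially the same route as the paper: both use Theorem \ref{otherprop} to establish $v_n \in \w$ and then insert $u_n^n\psi$ (for $\psi\in C^1_c(\Omega)$) as a test function in the extended formulation \eqref{defsol}, expanding the gradient and cancelling $u_n^n$ against $u_n^{-n}$ on the right. Your explicit verification that $|\nabla v_n|^2/v_n = \frac{4}{n+1}|\nabla u_n^{(n+1)/2}|^2 \in L^1(\Omega)$ and the remark that the argument extends to general $M(x)$ are small welcome additions, but the core argument is identical.
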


\begin{proof}
We already know, by Theorem \ref{otherprop}, that $\dis u_n^{n+1}$ belongs to $\w$, so that $v_n$ belongs to $\w$. With the same argument we have that $u_n^n$ belongs to $\w$. Let $\vp$ be a function in $C^1_c(\Omega)$, we have that $u_n^n\vp$ is a function in $\w$ with compact support ($\omega=\supp(\vp)$). Then we can take $u_n^n\vp$ as test function in \eqref{defsol} and we obtain that
\begin{equation}
\label{pr41}
\io\nabla u_n\cdot\nabla \vp \,u_n^n+n\io \nabla u_n\cdot\nabla u_n \,u_n^{n-1}\vp=\io f\vp.
\end{equation}
If we rewrite \eqref{pr41}, using that $u_n\geq c_{\omega,n}$ in $\omega$, we have
$$
\io\nabla\left(\frac{u_n^{n+1}}{n+1}\right)\cdot\nabla \vp+n\io |\nabla u_n|^2\,\frac{u_n^{2n}}{u_n^{n+1}}\vp=\io f\vp.
$$
Hence, by definition of $v_n$, we deduce that
$$
\io\nabla v_n\cdot\nabla \vp+\frac{n}{n+1}\io \frac{|\nabla v_n|^2}{v_n}\vp=\io f\vp,
$$
that is $v_n$ is a distributional solution with finite energy of \eqref{parn}.
\end{proof}

\begin{remark}
\label{remcomsup}
We note that for every $\omega \subset\subset\Omega$ we know, by Theorem \ref{daBO}, that $u_n\geq c_{\omega,n}$ in $\omega$. Then $\dis v_n\geq \frac{c_{\omega,n}^{n+1}}{n+1}$ in $\omega$. Using this property and that $v_n$ has finite energy we can extend the class of test functions for \eqref{parn} from $C^1_c(\Omega)$ to $\w$ with compact support.
\end{remark}

Now we study \eqref{pbn} in the one-dimensional case to better understand what happens, if $f$ is {\sl strictly positive}, to $u_n$ and to the related $v_n$ by passing to the limit as $n$ tends to infinity. \\

Fix $n > 3$ in $\N$. We consider \eqref{pbn} with $\Omega=(-R,R)$, $R>0$, $M(x)\equiv I$ and $f\equiv 1$ in $(-R,R)$. So that we have
\begin{equation}
\label{oned1}
\begin{cases}
\dis -u_n'' = \frac{1}{u_n^{n}} & \mbox{in $(-R,R)$,} \\
\hfill u_n(\pm R) = 0. \\
\end{cases}
\end{equation}
In order to study \eqref{oned1} we focus on the solutions $y_n$ of the following Cauchy problem 
\begin{equation}
\label{oned2}
\begin{cases}
\dis -y_n''(t) = \frac{1}{y_n^{n}(t)} & \mbox{for $t\geq 0$,} \\
y_n(0) = \alpha_n, \\
y_n'(0)=0,
\end{cases}
\end{equation}
where $\alpha_n$ is a positive real number that we will choose later. Defining $\dis w_n=\frac{y_n}{\alpha_n}$, we can rewrite \eqref{oned2} as
\begin{equation}
\label{oned3}
\begin{cases}
\dis -w_n''(t) = \frac{1}{\alpha_n^{n+1} w_n^{n}(t)} & \mbox{for $t\geq 0$,} \\
w_n(0) =1, \\
w_n'(0)=0.
\end{cases}
\end{equation}
Since $\dis \frac{1}{\alpha_n^{n+1}s^n}$ is Lipschitz continuous near $s=1$, there exists a unique solution $w_n$ locally near $t=0$. It is easy, by a classical iteration argument, to extend the definition interval of $w_n$ to $[0,T_n)$, where $T_n<+\infty$ is the first zero of $w_n$ (i.e. $w_n(T_n)=0$) when it occurs, otherwise $T_n=+\infty$. Hence $w_n$ is concave ($w_n''(t)<0$), decreasing ($w_n'(t)< 0$) and $0<w_n(t)\leq 1$ for $t\in[0,T_n)$ and it belongs to $C^\infty((0,T_n))$. \\
Now multiplying the equation by $w_n'(t)$ we have
$$
-\frac{[w_n'(t)^2]'}{2}=\frac{w_n'(t)}{\alpha_n^{n+1} w_n^{n}(t)},
$$
hence, integrating on $[0,s]$, with $0<s<T_n$, and recalling that $w_n'(0)=0$, we have
$$
w_n'(s)^2=\frac{2}{(n-1)\alpha_n^{n+1}}(w_n^{1-n}(s)-1).
$$
Since $w_n'(s)< 0$ we deduce
\begin{equation}
\label{oned4}
w_n'(s)=-\sqrt{\frac{2}{(n-1)\alpha_n^{n+1}}}\,(w_n^{1-n}(s)-1)^{\frac{1}{2}},
\end{equation}
therefore we can divide \eqref{oned4} by $\dis (w_n^{1-n}(s)-1)^{\frac{1}{2}}$ and integrate on $[0,t]$, with $0\leq t<T_n$, to obtain 
\begin{equation}
\label{oned5}
\int_0^t \frac{w_n'(s)}{(w_n^{1-n}(s)-1)^{\frac{1}{2}}}\,ds=-\sqrt{\frac{2}{(n-1)\alpha_n^{n+1}}}\,\,t.
\end{equation}
Setting $r=w_n(s)$ in the first integral of \eqref{oned5} and recalling that $w_n(0)=1$, we have
$$
\int_{w_n(t)}^1 \frac{r^{\frac{n-1}{2}}}{(1-r^{n-1})^\frac{1}{2}}\,dr=\sqrt{\frac{2}{(n-1)\alpha_n^{n+1}}}\,\,t.
$$
Once again we can perform the change of variable $h=1-r^{n-1}$ to deduce
\begin{equation}
\label{oned6}
\int_0^{1-w_n^{n-1}(t)}\frac{1}{h^{\frac{1}{2}}\,(1-h)^{\frac{n-3}{2(n-1)}}}\,\,dh=\sqrt{\frac{2(n-1)}{\alpha_n^{n+1}}}\,t.
\end{equation}
Define $\dis I_n(t):=\int_0^{1-w_n^{n-1}(t)}\frac{1}{h^{\frac{1}{2}}(1-h)^{\frac{n-3}{2(n-1)}}}\,\,dh$ for $t\geq 0$, then $I_n(0)=0$ and $I_n$ is a continuous positive and increasing function in $[0,T_n)$, so that $I_n(t)\leq I_n(T_n)$. Thanks to the results in \cite{AS} we obtain
\begin{equation}
\label{oned7}
I_n(T_n)=\int_0^{1}\frac{1}{h^{\frac{1}{2}}(1-h)^{\frac{n-3}{2(n-1)}}}\,\,dh=\sqrt{\pi}\frac{\Gamma\left(\frac{1}{2}+\frac{1}{n-1}\right)}{\Gamma\left(\frac{n}{n-1}\right)},
\end{equation}
where $\Gamma(s)$ is defined in \eqref{gammadef}. Thus we can extend $I_n(t)$ in $[0,T_n]$ and it is uniformly bounded for every $n\in\N$ and $t\in [0,T_n]$. Moreover, from \eqref{oned7} and computing \eqref{oned6} for $t=T_n$, we have
\begin{equation}
\label{oned8}
T_n=\sqrt\frac{\pi\,\alpha_n^{n+1}}{2(n-1)}\frac{\Gamma\left(\frac{1}{2}+\frac{1}{n-1}\right)}{\Gamma\left(\frac{n}{n-1}\right)}.
\end{equation}
We observe that $T_n$ and $\alpha_n$ are such that if $\alpha_n$ tends to infinity also $T_n$ tends to infinity. Recalling that we want a solution for \eqref{oned1} that is zero if $t=R$, imposing $T_n=R$ for every $n$ in $\N$ we find that
\begin{equation}
\label{oned9}
\alpha_n=\left(\frac{2R^2\,(n-1)\,\Gamma^2\left(\frac{n}{n-1}\right)}{\pi\,\Gamma^2\left(\frac{1}{2}+\frac{1}{n-1}\right)}\right)^{\frac{1}{n+1}}.
\end{equation}
Hence, with this value of $\alpha_n$, $w_n(R)=0$ for every $n$ in $\N$ and $w_n$ belongs to $C^2((0,R))$. Thanks to the initial condition $w_n'(0)=0$, we can extend $w_n$ to an even function $\tilde{w}_n$ on $[-R,R]$ in the following way
$$
\tilde{w}_n(t)=
\begin{cases}
w_n(t) & \mbox{for $t\in [0,R]$} \\
w_n(-t) & \mbox{for $t\in [-R,0)$}\,.
\end{cases}
$$
So $\tilde{w}_n$ belongs to $C^2_0((-R,R))$ and is the classical solution of 
\begin{equation}
\label{onedapp}
\begin{cases}
\dis -\tilde{w}_n''(t) = \frac{1}{\alpha_n^{n+1} \tilde{w}_n^{n}(t)} & \mbox{for $t\geq 0$,} \\
\tilde{w}_n(\pm R) =0\,. 
\end{cases}
\end{equation}
Setting $u_n(t)=\alpha_n\,\tilde{w}_n(t)$ for $t$ in $[-R,R]$ we have that $u_n$ belongs to $C^2_0((-R,R))$ and is the classical solution of \eqref{oned1}. This implies that $\dis v_n(t)=\frac{u_n(t)^{n+1}}{n+1}$ is a classical solution (in $C^2_0((-R,R))$) of 
\begin{equation}
\label{oned13}
\begin{cases}
\dis -v_n'' \,+\,\frac{n}{n+1} \frac{|v_n'|^2}{v_n}\,=\,1 & \mbox{in $(-R,R)$,} \\
v_n(\pm R) = 0, \\
\end{cases}
\end{equation}
that is \eqref{parn} in the one-dimensional case. Multiplying the equation \eqref{oned13} by $v_n$ and integrating by parts on $(-R,R)$ we obtain that $\{v_n\}$ is bounded in $W_0^{1,2}((-R,R))$. By definition of $v_n$, this implies that $\{\tilde{w}^{n+1}_n\}$ is bounded in $W_0^{1,2}((-R,R))$. Using the Rellich-Kondrachov theorem we deduce that there exist a subsequence, still indexed by $\tilde{w}^{n+1}_n$, and a function $g:(-R,R)\to [0,1]$ in $C_0((-R,R))$ such that $\tilde{w}^{n+1}_n$ uniformly converges to $g$ in $(-R,R)$. We want to make $g$ explicit. \\
By definition of $\tilde{w}_n$ it follows that 
$$
\lim_{n\to\infty}w_n^{n-1}(t)=\lim_{n\to\infty}\left(w_n^{n+1}(t)\right)^{\frac{n-1}{n+1}}=g(t)\,,
$$
uniformly in $(0,R)$. Combining \eqref{oned6} and \eqref{oned9} we obtain
\begin{equation}
\label{oned10}
\int_0^{1-w_n^{n-1}(t)}\frac{1}{h^{\frac{1}{2}}\,(1-h)^{\frac{n-3}{2(n-1)}}}\,\,dh=\frac{\sqrt{\pi}}{R}\frac{\Gamma\left(\frac{1}{2}+\frac{1}{n-1}\right)}{\Gamma\left(\frac{n}{n-1}\right)}\,t.
\end{equation}
Passing to the limit in \eqref{oned10} as $n$ tends to infinity we obtain the explicit expression of $g$. Indeed we have, by Lebesgue theorem and from well known result of integral calculus, that 
$$
2\arcsin(\sqrt{1-g(t)})=\lim_{n\to\infty}\int_0^{1-w_n^{n-1}(t)}\frac{1}{h^{\frac{1}{2}}\,(1-h)^{\frac{n-3}{2(n-1)}}}\,\,dh=\lim_{n\to\infty}\frac{\sqrt{\pi}}{R}\frac{\Gamma\left(\frac{1}{2}+\frac{1}{n-1}\right)}{\Gamma\left(\frac{n}{n-1}\right)}\,t=\frac{\pi}{R}\,t.
$$
It follows that
$$
g(t)=1-\sin^2\left(\frac{\pi}{2R}\,t\right)=\cos^2\left(\frac{\pi}{2R}\,t\right).
$$
So $g$ is an even $C^\infty$ function defined on $\R$, in particular on $[-R,R]$. \\
Fix now $t$ in $(-R,R)$. We want to prove that $\tilde{w}_n(t)$ tends to $1$ as $n$ tends to infinity.
We assume, by contradiction, that 
$$
\lim_{n\to\infty}\tilde{w}_n(t)=\beta<1.
$$
Defining $\dis \varepsilon:=\frac{1-\beta}{2}$, we deduce, for $n$ large enough, that $\tilde{w}_n(t)\leq 1-\varepsilon$. So that
$$
\tilde{w}_n^{n+1}(t)\leq (1-\varepsilon)^{n+1},
$$
and, letting $n$ tend to infinity, we obtain $\dis \cos^2\left(\frac{\pi}{2R}\,t\right)=0$. Since $t\neq \pm R$, we find a contradiction, then $\tilde{w}_n(t)$ tends to $1$, as $n$ tends to infinity, for every $t$ in $(-R,R)$.\\
Now we return to problem \eqref{oned1} recalling that $u_n(t)=\alpha_n\,\tilde{w}_n(t)$. From \eqref{oned9} and using that $\tilde{w}_n(t)$ tends to $1$, as $n$ tends to infinity, for $t$ in $(-R,R)$, it follows that
$$
\lim_{n\to\infty}u_n(t)=1, \qquad \forall t\in(-R,R).
$$
This result is exactly the one-dimensional version of Remark \ref{cons}. From \eqref{oned9}, we deduce that
$$
v_n(t)=\frac{2R^2\,(n-1)\,\Gamma^2\left(\frac{n}{n-1}\right)}{\pi\,(n+1)\,\Gamma^2\left(\frac{1}{2}+\frac{1}{n-1}\right)}\,\tilde{w}_n^{n+1}(t),
$$
so that we have that there exists a limit function $v:[-R,R]\to\R$ such that
$$
v(t)=\lim_{n\to\infty}v_n(t)=\frac{2R^2}{\pi^2}\, \cos^2\left(\frac{\pi}{2R}\,t\right).
$$
After a little algebra we obtain that $v$ is a classical solution of 
\begin{equation*}
\begin{cases}
\dis -v'' \,+\, \frac{|v'|^2}{v}\,=\,1 & \mbox{in $(-R,R)$,} \\
v_n(\pm R) = 0, \\
\end{cases}
\end{equation*}
that is \eqref{pbv4}. Thus we have proved Theorem \ref{main3} in the one-dimensional case. \\
Finally we prove Theorem \ref{main3} in the $N$-dimensional case, here we recall that $f$ is {\sl strictly positive}.

\begin{proof}[{\sl Proof of Theorem \ref{main3}}]
Let $u_n$ be the solution of \eqref{pbn} given by Theorem \ref{daBO}. It follows from Proposition \ref{link} that $v_n$ are distributional solutions of \eqref{parn}. \\
By assumption for every $\omega \subset\subset \Omega$ there exists a positive constant $c_\omega$ such that $f\geq c_\omega$. This implies, by Theorem \ref{dasotto}, that
$$
\un\, \geq\, (n+1)^{\frac{1}{n+1}}{\rm e}^{-\frac{M_{\omega}}{n+1}},
$$ 
then 
\begin{equation}
\label{sottov}
v_n\, \geq \, {\rm e}^{-M_{\omega}}, \qquad \forall \,\omega\subset\subset\Omega,
\end{equation}
with $M_{\omega}$ a positive constant depending only on $\omega$. So $v_n$ is locally uniformly positive. Moreover, by Theorem \ref{otherprop}, we have that $v_n$ belongs to $\w$ and
$$
\dis \norma{v_n}{\linf}\leq C\norma{f}{\linf}\!\!,
$$
where $C$ is a positive constant. \\
Choosing a nonnegative $\varphi$ belonging to $C_c^1(\Omega)$ as test function in \eqref{parn} and dropping the nonnegative integral involving the quadratic gradient term, we deduce that
\begin{equation}
\label{pos1}
\io \nabla v_n\cdot \nabla \varphi\leq\io f\,\varphi\,.
\end{equation}
As a consequence of the density of $C_c^1(\Omega)$ in $\w$ we can extend \eqref{pos1} for every nonnegative $\varphi$ in $\w$. Choosing $v_n$ as test function and using H\"{o}lder's inequality and the Sobolev embedding theorem, we obtain
$$
\io |\nabla v_n|^2 \leq \io f\,v_n\leq \norma{f}{L^{\frac{2N}{N+2}}(\Omega)}\norma{v_n}{L^{2^*}(\Omega)} \leq \mathcal{S} \norma{f}{L^{\frac{2N}{N+2}}(\Omega)}\norma{v_n}{\w}\!\!,
$$
where $\mathcal{S}$ is the Sobolev constant. Hence $\{v_n\}$ is bounded in $\w$. Thus, up to a subsequence, it follows that there exists $v$ belonging to $\w\cap\linf$ such that
\begin{equation}
\label{pos2}
\begin{array}{l}
v_{n} \rightarrow v \text{ weakly in } \w \text{ and weakly-* in } \linf,\\
v_{n} \rightarrow v \text{ strongly in } L^q(\Omega),\,\forall\,q<+\infty, \text{ and a.e. in } \Omega.
\end{array}
\end{equation}
In order to pass to the limit in \eqref{parn} we first prove that $v_n$ strongly converges to $v$ in $W^{1,2}_{{\rm loc}}(\Omega)$, that is
\begin{equation}
\label{pos3}
\lim_{n\to +\infty}\io |\nabla(v_n-v)|^2\varphi=0, \qquad \forall\,\varphi\in C^1_c(\Omega) \,\text{ with }\, \varphi \geq 0\,.
\end{equation}
We consider the function $\phi_\lambda(s)$ defined in \eqref{defphilam} and, choosing $\phi_\lambda(v_n-v)\varphi$ as test function in \eqref{parn}, we obtain 
\begin{align*}
&\io \nabla v_n\cdot\nabla(v_n-v)\,\phi'_\lambda(v_n-v)\,\varphi\,+\,\io\nabla v_n\cdot \nabla\varphi\,\phi_\lambda(v_n-v)\, \\
&+\,\frac{n}{n+1}\io \frac{|\nabla v_n|^2}{v_n}\,\phi_\lambda(v_n-v)\varphi\,=\,\io f\,\,\phi_\lambda(v_n-v)\,\varphi. \nonumber
\end{align*}
It follows from \eqref{pos2} and using Lebesgue theorem that 
$$
\lim_{n\to+\infty}\io\nabla v_n\cdot \nabla\varphi\,\phi_\lambda(v_n-v)=0 \quad\text{and}\quad
\lim_{n\to+\infty}\io f\,\,\phi_\lambda(v_n-v)\,\varphi=0.
$$
Thus 
\begin{equation}
\label{pos4}
\io \nabla v_n\cdot\nabla(v_n-v)\,\phi'_\lambda(v_n-v)\,\varphi\,+\,\frac{n}{n+1}\io \frac{|\nabla v_n|^2}{v_n}\,\phi_\lambda(v_n-v)\varphi\,=\,\epsilon(n).
\end{equation}
Moreover, setting $\omega_\varphi=\supp(\varphi)$ and using \eqref{sottov}, we deduce that
\begin{align*}
\frac{n}{n+1}\io \frac{|\nabla v_n|^2}{v_n}\,\phi_\lambda(v_n-v)\varphi &\geq \, -\frac{n}{n+1}\io \frac{|\nabla v_n|^2}{v_n}\,|\phi_\lambda(v_n-v)|\varphi \\
&\geq -\,{\rm e}^{M_{\omega_\varphi}} \io |\nabla v_n|^2\,|\phi_\lambda(v_n-v)|\,\varphi,
\end{align*}
so that
\begin{equation}
\label{pos5}
\io \nabla v_n\cdot\nabla(v_n-v)\,\phi'_\lambda(v_n-v)\,\varphi\,-\,{\rm e}^{M_{\omega_\varphi}} \io |\nabla v_n|^2\,|\phi_\lambda(v_n-v)|\,\varphi=\epsilon(n).
\end{equation}
We can add the following term to \eqref{pos5} 
$$
-\io \nabla v\cdot \nabla(v_n-v)\,\phi'_\lambda(v_n-v)\,\varphi
$$
and, noting that by \eqref{pos2} this quantity tends to $0$ letting $n$ go to infinity, we obtain 
\begin{equation}
\label{pos6}
\io |\nabla(v_n-v)|^2\,\phi'_\lambda(v_n-v)\,\varphi\,-\,{\rm e}^{M_{\omega_\varphi}} \io |\nabla v_n|^2\,|\phi_\lambda(v_n-v)|\,\varphi=\epsilon(n).
\end{equation}
Since using once again \eqref{pos2} we have
\begin{align*}
&\io |\nabla v_n|^2\,|\phi_\lambda(v_n-v)|\,\varphi\leq 2\io |\nabla(v_n-v)|^2\,|\phi_\lambda(v_n-v)|\,\varphi\, \\
&+\,2\io |\nabla v|^2\,|\phi_\lambda(v_n-v)|\,\varphi\,=\,2\io |\nabla(v_n-v)|^2\,|\phi_\lambda(v_n-v)|\,\varphi\,+\,\epsilon(n)\,,
\end{align*}
we deduce that
$$
\io |\nabla(v_n-v)|^2\,\left\{\phi'_\lambda(v_n-v)-2{\rm e}^{M_{\omega_\varphi}}|\phi_\lambda(v_n-v)|\right\}\,\varphi\,=\,\epsilon(n).
$$
Choosing $\lambda\geq {\rm e}^{2M_{\omega_\varphi}}$, thanks to \eqref{defphilam2} we have that  $\dis \{\phi'_\lambda(v_n-v)-2{\rm e}^{M_{\omega_\varphi}}|\phi_\lambda(v_n-v)|\}\geq \frac{1}{2}$, hence \eqref{pos3} holds and 
\begin{equation}
\label{pos7}
v_n \rightarrow v \text{ strongly in } W^{1,2}_{{\rm loc}}(\Omega). 
\end{equation}
Now we pass to the limit in \eqref{parn} with test functions $\varphi$ belonging to $\w\cap\linf$ with compact support. We have, by \eqref{pos3}, that
$$
\lim_{n\to+\infty}\io \nabla v_n\cdot \nabla \phi=\io\nabla v\cdot\nabla \varphi,
$$
and, using \eqref{pos7}, \eqref{sottov} with $\omega=\supp(\varphi)$ and Lebesgue theorem, we deduce
$$
\lim_{n\to+\infty}\frac{n}{n+1}\io \frac{|\nabla v_n|^2}{v_n}\,\varphi\,=\,\io \frac{|\nabla v|^2}{v}\,\varphi,
$$
so that
\begin{equation}
\label{pos8}
\io\nabla v\cdot\nabla \varphi\,+\,\io \frac{|\nabla v|^2}{v}\,\varphi\,=\,\io f\,\varphi\,,
\end{equation}
for all $\varphi$ in $\w\cap\linf$ with compact support. \\
Let $\varphi$ be a nonnegative function in $\w\cap\linf$. Let $\{\varphi_m\}$ in $C^1_c(\Omega)$ be a sequence of nonnegative functions that converges to $\vp$ strongly in $\w$. Taking $\varphi_m\wedge\varphi$, which belongs to $\w\cap\linf$ with compact support, as test function in \eqref{pos8}, we obtain
\begin{equation}
\label{pos9}
\io \frac{|\nabla v|^2}{v}\,(\varphi_m\wedge\varphi)\,= \,\io f\,(\varphi_m\wedge\varphi)\,-\,\io\nabla v\cdot\nabla(\varphi_m\wedge\varphi)\,.
\end{equation}
Since $\varphi_m\wedge\varphi$ strongly converges to $\varphi$ in $\w$ we have
\begin{equation}
\label{pos10}
\lim_{m\to+\infty}\io\left\{f\,(\varphi_m\wedge\varphi)-\io\nabla v\cdot\nabla(\varphi_m\wedge\varphi)\right\}=\,\io f\,\varphi\,-\,\io\nabla v\cdot\nabla \varphi\,.
\end{equation}
Moreover $\dis\frac{|\nabla v|^2}{v}\,(\varphi_m\wedge\varphi)$ is a nonnegative function that converges to $\dis\frac{|\nabla v|^2}{v}\,\varphi$ almost everywhere in $\Omega$. Applying Fatou's lemma on the left hand side of \eqref{pos9} and using \eqref{pos10} we deduce that
$$
\io\frac{|\nabla v|^2}{v}\,\varphi\,\leq\,\liminf_{m\to+\infty}\,\io\frac{|\nabla v|^2}{v}\,(\varphi_m\wedge\varphi)\,=\,\io f\,\varphi\,-\,\io\nabla v\cdot\nabla \varphi\,,
$$
so that $\dis\frac{|\nabla v|^2}{v}\,\varphi$ belongs to $\luno$. Since $\dis \frac{|\nabla v|^2}{v}\,(\varphi_m\wedge\varphi)\leq \frac{|\nabla v|^2}{v}\,\varphi$, by Lebesgue theorem, we have
\begin{equation}
\label{pos11}
\lim_{m\to+\infty}\io \frac{|\nabla v|^2}{v}\,(\varphi_m\wedge\varphi)\,= \,\io\frac{|\nabla v|^2}{v}\,\varphi\,.
\end{equation}
As a consequence of \eqref{pos10} and \eqref{pos11} we obtain 
\begin{equation}
\label{pos12}
\io\nabla v\cdot\nabla \varphi\,+\,\io \frac{|\nabla v|^2}{v}\,\varphi\,=\,\io f\,\varphi\,,\quad\forall\,\varphi\,\geq 0\text{ in } \w\cap\linf\,.
\end{equation}
Furthermore, taking $\dis \frac{T_\varepsilon(v)}{\varepsilon}$ as test function in \eqref{pos12} and dropping a positive term, we deduce
\begin{equation}
\label{pos13}
\io\frac{|\nabla v|^2}{v}\,\frac{T_\varepsilon(v)}{\varepsilon}\,\leq\,\io f\,\frac{T_\varepsilon(v)}{\varepsilon}\,.
\end{equation}
Applying Fatou's lemma on the left hand side of \eqref{pos13} and noting that $T_\varepsilon(v)\leq\varepsilon$ we have
$$
\io\frac{|\nabla v|^2}{v}\,\leq\,\liminf_{\varepsilon\to 0}\io\frac{|\nabla v|^2}{v}\,\frac{T_\varepsilon(v)}{\varepsilon}\,\leq\,\io f,
$$
so $\dis \frac{|\nabla v|^2}{v}$ belongs to $\luno$. Since we can write each $\vp\in W^{1,2}_0(\Omega)\cap L^\infty(\Omega)$ as the difference between its positive and its negative part, we trivially deduce that \eqref{pos12} holds for all $\vp\in W^{1,2}_0(\Omega)\cap L^\infty(\Omega)$, so that $v$ is a weak solution of \eqref{pbv4}.
\end{proof}

\begin{remark}
We note that we can also consider test functions only belonging to $\w$ in \eqref{pos12}. Indeed let $\vp$ be in $\w$, then $T_k(\vp^+)$ is a positive function belonging to $\w\cap\linf$ that strongly converges to $\vp^+$ in $\w$ as $k$ tends to infinity. Taking $T_k(\vp^+)$ as test function in \eqref{pos12} and letting $k$ tend to infinity, by Lebesgue theorem and Beppo Levi theorem, we deduce
\begin{equation}
\label{pos14}
\io\nabla v\cdot\nabla \varphi^+\,+\,\io \frac{|\nabla v|^2}{v}\,\varphi^+\,=\,\io f\,\varphi^+\,.
\end{equation}
In the same way we obtain
\begin{equation}
\label{pos15}
\io\nabla v\cdot\nabla \varphi^-\,+\,\io \frac{|\nabla v|^2}{v}\,\varphi^-\,=\,\io f\,\varphi^-\,,
\end{equation}
so that subtracting \eqref{pos15} to \eqref{pos14} we have that \eqref{pos12} holds for every $\vp$ belonging to $\w$.
\end{remark}

\begin{remark}
\label{vnbuo}
To prove that $\{v_n\}$ is bounded in $\w$ and \eqref{pos2} we only used that $f$ is nonnegative and belongs to $\linf$.
\end{remark}

\section{Proof of Theorem \ref{main4}}
\label{s6}

Here we prove Theorem \ref{main4}. We fix $n > 3$ in $\N$.

\begin{proof}[{\sl Proof of Theorem \ref{main4}}]
First we study the behavior of the weak solution of \eqref{oned00} given by Theorem \ref{daBO}. In order to study $u_n$ we use the construction of one-dimensional solutions done in the previous section, in which we have proved that there exists a function $w_n$ in $C^2((0,T_n))$ classical solution of
\begin{equation}
\label{oned01}
\begin{cases}
\dis -w_n''(t) = \frac{1}{\alpha_n^{n+1} w_n^{n}(t)} & \mbox{ in }(0,T_n), \\
w_n(0) =1, \\
w_n'(0)=0,
\end{cases}
\end{equation}
where $T_n$ is the first zero of $w_n$. We recall that $0<w_n(t)<1$, $w_n$ is concave ($w''_n(t)<0$) and decreasing ($w'_n(t)<0$) for every $t$ in $(0,T_n)$. Moreover we have obtained that 
\begin{equation}
\label{oned02}
w_n'(t)=-\sqrt{\frac{2}{(n-1)\alpha_n^{n+1}}}\,(w_n^{1-n}(t)-1)^{\frac{1}{2}},
\end{equation}
and, by integrating, that
\begin{equation}
\label{oned03}
S_n(1-w_n^{n-1}(t)):=\int_0^{1-w_n^{n-1}(t)}\frac{1}{h^{\frac{1}{2}}\,(1-h)^{\frac{n-3}{2(n-1)}}}\,\,dh=\sqrt{\frac{2(n-1)}{\alpha_n^{n+1}}}\,t\,,
\end{equation}
for every $t$ in $[0,T_n)$. So that $S_n:[0,1)\to[0,S_n(1))$ is a nonnegative, continuous and strictly increasing function. Recalling \eqref{oned7} we have that $S_n(1)=I_n(T_n)$, that is uniformly bounded, thus we can extend $S_n$ in $1$ to have $S_n:[0,1]\to[0,S_n(1)]$. Then there exists the inverse function $S_n^{-1}:[0,S_n(1)]\to [0,1]$. Furthermore we recall that
\begin{equation}
\label{oned04}
T_n=\sqrt\frac{\pi\,\alpha_n^{n+1}}{2(n-1)}\frac{\Gamma\left(\frac{1}{2}+\frac{1}{n-1}\right)}{\Gamma\left(\frac{n}{n-1}\right)}.
\end{equation}
In order to have $1<T_n<+\infty$ for every $n$ we can choose $\dis \alpha_n=(c_n(n-1))^{\frac{1}{n+1}}$, with $c_n$ a positive constant such that 
\begin{equation}
\label{oned05}
c_n>\frac{2\,\Gamma^2\left(\frac{n}{n-1}\right)}{\pi\,\Gamma^2\left(\frac{1}{2}+\frac{1}{n-1}\right)}=:\underline{c}_n\,,\qquad \forall\, n \text{ in } \N.
\end{equation}
Now we consider the following Cauchy problem 
\begin{equation}
\label{oned06}
\begin{cases}
\dis -y_n''(t) =\frac{\chi_{(0,1)}}{c_n(n-1)y_n^n(t)} & \mbox{ for } t \geq 0\,, \\
y_n(0)=1, \\
y_n'(0)=0.
\end{cases}
\end{equation}
For every $t$ in $(0,1)$ we have that \eqref{oned01} and \eqref{oned06} are the same problem, so that there exists $y_n(t)\equiv w_n(t)$ classical solution of \eqref{oned06} in $(0,1)$. Since $y_n''(t)=0$ for every $t\geq 1$, we deduce that $y_n(t)=y_n(1)+y_n'(1)(t-1)=w_n(1)+w_n'(1)(t-1)$ in $[1,2)$. It follows from \eqref{oned02} and by the definition of $\alpha_n$ that
\begin{equation}
\label{oned07}
w_n'(1)=-\sqrt{\frac{2}{c_n(n-1)^2}}\,(w_n^{1-n}(1)-1)^{\frac{1}{2}}\,.
\end{equation}
Since we want that $y_n(2)=0$ for every $n$ in $\N$, we look for $c_n$ such that $w_n'(1)=-w_n(1)$. With a little algebra it follows from \eqref{oned07} and \eqref{oned03} that is possible if and only if, for every fixed $n$, we have
\begin{equation}
\label{oned08}
w_n^{n+1}(1)=\frac{2}{c_n(n-1)^2}\,(1-w_n^{n-1}(1))\,=\,\frac{2}{c_n(n-1)^2}\,S_n^{-1}\left(\sqrt{\frac{2}{c_n}}\right)\,.
\end{equation}
By Lemma \ref{sucn} below there exists a sequence $\{c_n\}$ such that \eqref{oned08} holds for every $n$, hence we have that $y_n$ belonging to $C^1((0,2))$ is such that
\begin{equation}
\label{oned011}
y_n(t)\equiv w_n(t) \text{ in } [0,1]\,,\quad y_n(t)=w_n(1)(2-t) \text{ in }(1,2]\,, \quad y'_n(0)=y_n(2)=0\,.
\end{equation}
We want that $w_n(t)\leq y_n(t)$ in $[0,T_n]$. This is true if and only if $T_n\leq 2$. If, by contradiction, $T_n>2$ we have $w_n(t)\equiv y_n(t)$ in $[0,1]$ and $-y''_n(t)<-w''_n(t)$ in $(1,2]$, so that, by $w'_n(1)=y'_n(1)$, we deduce $w_n(t)< y_n(t)$ in $(1,2]$. It follows from $y_n(2)=0$ that $0<w_n(2)<0$, that is a contradiction. Then we obtain $T_n\leq 2$, $w_n(t)\leq y_n(t)$ in $[0,T_n]$ and, by \eqref{oned04}, that
\begin{equation}
\label{oned09}
c_n\,\leq\,\frac{8\,\Gamma^2\left(\frac{n}{n-1}\right)}{\pi\,\Gamma^2\left(\frac{1}{2}+\frac{1}{n-1}\right)}=:\overline{c}_n\,,\qquad \forall\, n \text{ in } \N\,.
\end{equation}
Thus $\{c_n\}$ is bounded and, up to subsequences, there exists a positive real number $c_\infty$ such that
$$
\frac{2}{\pi^2}\,=\,\lim_{n\to+\infty}\underline{c}_n\,\leq\,c_\infty:=\lim_{n\to+\infty}c_n\,\leq\,\lim_{n\to+\infty}\overline{c}_n\,=\,\frac{8}{\pi^2}\,,
$$
and, respectively,
$$
1\,\leq\,T_\infty:=\lim_{n\to+\infty} T_n\,=\,\pi\,\sqrt{\frac{c_\infty}{2}}\,\leq\,2\,.
$$
As shown in the previous section, it follows from \eqref{oned03} that
\begin{equation}
\label{oned010}
\lim_{n\to+\infty} w^{n+1}_n(t)=\cos^2{\left(\frac{\pi}{2\,T_\infty}t\right)} \quad\text{and}\quad\lim_{n\to+\infty} w_n(t)=1, \qquad\text{for } t\in(0,T_\infty)\,.
\end{equation}
Now we suppose that $T_\infty>1$. Fix $\dis \beta=\frac{T_\infty-1}{2}>0$, so that $1+\beta<T_\infty$. We know that for $n$ large enough
$$
w_n(1+\beta)\,\leq\, y_n(1+\beta)\,=\,w_n(1)(1-\beta)\,.
$$
By passing to the limit as $n$ tends to infinity and using \eqref{oned010} we obtain $1\,\leq \,1-\beta$, that is $\beta\,\leq\,0$. This is a contradiction, then $T_\infty\,=\,1$ and, therefore, $\dis c_\infty\,=\,\frac{2}{\pi^2}$. \\
Recalling that $y_n(t)\equiv w_n(t)$ in $(0,1)$ and using, once again, \eqref{oned010} we have
\begin{equation}
\label{oned012}
\lim_{n\to+\infty} y^{n+1}_n(t)=\cos^2{\left(\frac{\pi}{2}t\right)} \quad\text{and}\quad\lim_{n\to+\infty} y_n(t)=1\,, \qquad\text{for } t\in(0,1)\,.
\end{equation}
It follows from \eqref{oned08} and using that $y_n(1)=w_n(1)$ for every $n$ that 
\begin{equation}
\label{oned013}
\lim_{n\to+\infty}y_n^{n+1}(1)=0\quad\text{and}\quad\lim_{n\to+\infty} y_n(1)=1\,,
\end{equation}
hence, by \eqref{oned011}, we obtain that $y_n^{n+1}(t)=w_n(1)^{n+1}(2-t)^{n+1}$ and that
\begin{equation}
\label{oned014}
\lim_{n\to+\infty} y^{n+1}_n(t)\,=\,0 \quad\text{and}\quad\lim_{n\to+\infty} y_n(t)=(2-t)\,, \qquad\text{for } t\in(1,2]\,.
\end{equation}
Therefore, by the initial condition $y'_n(0)=0$, we can extend $y_n$ to an even function defined in $(-2,2)$ as follows
$$
\tilde{y}_n(t)=
\begin{cases}
(c_n(n-1))^{\frac{1}{n+1}}y_n(t) & \mbox{for $t\in [0,2]$}, \\
(c_n(n-1))^{\frac{1}{n+1}}y_n(-t) & \mbox{for $t\in [-2,0)$},
\end{cases}
$$
so that $\tilde{y}$ belonging to $C^1_0((-2,2))$ is a weak solution of \eqref{oned00}. By Remark \ref{bocas} there is a unique weak solution of \eqref{oned00}, hence $\tilde{y}_n(t)\equiv u_n(t)$ for every $t$ in $(-2,2)$ and $n$ in $\N$. \\
Moreover, by Proposition \ref{link}, setting $\dis v_n(t)=\frac{u_n^{n+1}(t)}{n+1}$, we have that $v_n$ in $C^1_0((-2,2))$ is a weak solution of \eqref{pbv5} and, by Remark \ref{vnbuo}, that there exists a function $v$ such that $v_n$ weakly converges to $v$ in $W^{1,2}_0((-2,2))$ and almost everywhere in $(-2,2)$. As a consequence of \eqref{oned012}, \eqref{oned013} and \eqref{oned014} we deduce that
$$
v(t)=
\begin{cases}
\dis \frac{2}{\pi^2}\cos^2{\left(\frac{\pi}{2}\,t\right)} & \mbox{for $t\in (-1,1)$}, \\
0 & \mbox{for $t\in [-2,-1]\cup[1,2]$},
\end{cases}
$$
so that $v$ belongs to $C^1_0(-2,2)\cap C^{\infty}_0(-1,1)$. Furthermore, with a little algebra, it follows that $v$ is a classical solution of \eqref{pbv6}.
\end{proof}

\begin{remark}
\label{recov}
From the proof of Theorem \ref{main4} we deduce that $u_n$ pointwise converges to $u$ defined as follows
$$
u(t)=
\begin{cases}
(2-t) & \mbox{for $t\in [1,2]$}, \\
1 & \mbox{for $t\in (-1,1)$}, \\
(2+t) & \mbox{for $t\in [-2,-1]$}.
\end{cases}
$$
Moreover, by Theorem \ref{main1}, $u_n$ weakly converges to $u$ in $W^{1,2}_0((-2,2))$. Hence we have that
$$
u'(t)=
\begin{cases}
-1 & \mbox{for $t\in (1,2)$}, \\
0 & \mbox{for $t\in (-1,1)$}, \\
1 & \mbox{for $t\in (-2,-1)$},
\end{cases}
$$
and $u$ is a distributional solution of 
$$
\begin{cases}
- u''=-\delta_{-1}\,+\,\delta_{1} & \mbox{in $(-2,2)$,} \\
\hfill u(\pm 2)= 0\,.
\end{cases}
$$
So that we have completely recovered the results of Theorem \ref{main1}.
\end{remark}

To be complete we show the technical lemma that we needed to prove the theorem. 

\begin{lemma}
\label{sucn}
Let $c$ belong to $(c_0,+\infty)$, with $\dis c_0=\frac{2\,\Gamma^2\left(\frac{n}{n-1}\right)}{\pi\,\Gamma^2\left(\frac{1}{2}+\frac{1}{n-1}\right)}$. Let $w_c(t)$ be the classical solution of 
\begin{equation}
\label{sucn1}
\begin{cases}
\dis -w_c''(t) = \frac{1}{c(n-1)\,w_c^{n}(t)} & \mbox{ for } t\geq 0, \\
w_c(0) =1, \\
w_c'(0)=0. 
\end{cases}
\end{equation}
Let $T_c$ be the first zero of $w_c$. Then there exists a unique $\tilde{c}$ in $(c_0,+\infty)$ such that $T_{\tilde{c}}>1$ and
\begin{equation}
\label{sucn2}
w_{\tilde{c}}^{n+1}(1)=\,\frac{2}{\tilde{c}(n-1)^2}\,S_{\tilde{c}}^{-1}\left(\sqrt{\frac{2}{\tilde{c}}}\right)\,,
\end{equation} 
where $S_c:[0,1]\to [0,S_c(1)]$ is defined as
$$
S_c(1-w_c^{n-1}(t)):=\int_0^{1-w_c^{n-1}(t)}\frac{1}{h^{\frac{1}{2}}\,(1-h)^{\frac{n-3}{2(n-1)}}}\,\,dh\,,
$$
for $t$ in $[0,T_c]$.
\end{lemma}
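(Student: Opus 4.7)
The plan is to reduce \eqref{sucn2} to the simpler first-order condition $w_c(1) + w_c'(1) = 0$, and then prove existence and uniqueness of $\tilde c$ by the intermediate value theorem through a $c$-independent rescaling of $w_c$.

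First, I would translate \eqref{sucn2} into an equivalent algebraic condition on $(w_c(1), w_c'(1))$. Exactly as for \eqref{oned07}, but with the parameter $c$ in place of $c_n$, the conservation law for \eqref{sucn1} gives
\[
w_c'(1) \,=\, -\sqrt{\frac{2}{c(n-1)^2}}\,\bigl(w_c^{1-n}(1) - 1\bigr)^{1/2}.
\]
Squaring the identity $-w_c'(1) = w_c(1)$ and multiplying by $w_c^{n-1}(1)>0$ yields precisely the right-hand side of \eqref{sucn2} (recall $1 - w_c^{n-1}(1) = S_c^{-1}(\sqrt{2/c})$ by \eqref{oned03} at $t=1$). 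Hence the lemma reduces to showing that there is a unique $\tilde c\in(c_0,+\infty)$ with $w_{\tilde c}(1) + w_{\tilde c}'(1)=0$.

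Next I would introduce the rescaling $\tilde w(\tau):=w_c(T_c\tau)$ for $\tau\in[0,1]$. Using \eqref{oned04} to express
\[
T_c^{\,2} \,=\, \frac{\pi c}{2}\,\frac{\Gamma^2\!\bigl(\tfrac12+\tfrac{1}{n-1}\bigr)}{\Gamma^2\!\bigl(\tfrac{n}{n-1}\bigr)},
\]
a direct computation shows that $\tilde w$ solves the autonomous Cauchy problem
\[
-\tilde w''(\tau) \,=\, \frac{\pi}{2(n-1)}\,\frac{\Gamma^2\!\bigl(\tfrac12+\tfrac{1}{n-1}\bigr)}{\Gamma^2\!\bigl(\tfrac{n}{n-1}\bigr)}\,\frac{1}{\tilde w^n(\tau)},\qquad \tilde w(0)=1,\quad \tilde w'(0)=0,
\]
which is $c$-independent and satisfies $\tilde w(1)=0$. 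Since $-\tilde w''>0$ on $(0,1)$, $\tilde w$ is strictly decreasing and strictly concave, and $\tilde w'$ is strictly decreasing from $0$ at $\tau=0$ to $-\infty$ at $\tau=1^-$. Moreover, by \eqref{oned04}, $c\mapsto T_c$ is a continuous strictly increasing bijection from $(c_0,+\infty)$ onto $(1,+\infty)$, so $\tau_c:=1/T_c$ is a continuous strictly decreasing bijection from $(c_0,+\infty)$ onto $(0,1)$.

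The rescaling then gives $w_c(1) = \tilde w(\tau_c)$ and $w_c'(1) = \tilde w'(\tau_c)/T_c$. Consequently $c\mapsto w_c(1)$ is continuous and strictly increasing on $(c_0,+\infty)$, with limits $0$ as $c\to c_0^+$ and $1$ as $c\to+\infty$; and $c\mapsto -w_c'(1) = -\tilde w'(\tau_c)/T_c$ is the product of two strictly positive, strictly decreasing continuous functions of $c$, hence strictly decreasing, with limits $+\infty$ as $c\to c_0^+$ and $0$ as $c\to+\infty$. Therefore $F(c):=w_c(1)+w_c'(1)$ is continuous and strictly increasing on $(c_0,+\infty)$ with $F(c_0^+)=-\infty$ and $F(+\infty)=1$, and the intermediate value theorem delivers a unique $\tilde c\in(c_0,+\infty)$ with $F(\tilde c)=0$; by construction $T_{\tilde c}>1$. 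The main technical obstacle is to track the boundary behavior at $c\to c_0^+$ (i.e.\ $\tau_c\to 1^-$), where $\tilde w$ develops a vertical tangent, in order to justify the two limits $w_c(1)\to 0$ and $-w_c'(1)\to+\infty$; the conservation law for $\tilde w$ handles this cleanly.
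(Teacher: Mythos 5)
Your proof is correct, and it takes a genuinely different route from the paper. The paper defines $F(c)=w_c^{n+1}(1)-\frac{2}{c(n-1)^2}\,S_c^{-1}\bigl(\sqrt{2/c}\bigr)$ directly, proves $F$ is strictly increasing via a comparison argument (using the conservation law at a hypothetical crossing point to show $w_{c_1}(t)<w_{c_2}(t)$ for $c_1<c_2$), computes $\lim_{c\to c_0^+}F(c)<0$ and $\lim_{c\to+\infty}F(c)=1$, and invokes Bolzano. You instead first reduce \eqref{sucn2} to the cleaner phase-space condition $w_c(1)+w_c'(1)=0$, and then introduce the rescaling $\tilde w(\tau)=w_c(T_c\tau)$, which — because $T_c^2/c$ is a $c$-independent constant — collapses the whole one-parameter family to a single fixed profile $\tilde w$ sampled at $\tau_c=1/T_c$. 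This turns every monotonicity and limit claim into an elementary statement about the single ODE solution $\tilde w$ and the explicit formula for $T_c$, so no comparison argument between two solutions is needed. Your version is arguably tighter: the paper's comment that monotonicity of $c\mapsto w_c(t)$ ``implies that $F$ also is monotone increasing'' only addresses the term $w_c^{n+1}(1)$ explicitly, leaving the monotonicity of $-\frac{2}{c(n-1)^2}S_c^{-1}(\sqrt{2/c})$ to the reader (it is indeed increasing in $c$, since $\frac{2}{c(n-1)^2}$ and $S^{-1}(\sqrt{2/c})$ are both positive and decreasing), whereas your decomposition $F(c)=\tilde w(\tau_c)+\tilde w'(\tau_c)/T_c$ makes the strict monotonicity of both summands manifest. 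Both proofs establish the same statement; yours buys transparency at the cost of the extra (but simple) observation that the rescaled equation is autonomous in $c$.
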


\begin{proof}
It follows from the proof of Theorem \ref{main4} that if $c>c_0$ then there exists $w_c(t)$ classical solution of \eqref{sucn1} in $[0,T_c]$, with $T_c>1$. \\
Now we define $F:(c_0,+\infty)\to\R$ as 
$$
F(c)\,=\,w_c^{n+1}(1)-\,\frac{2}{c(n-1)^2}\,S_c^{-1}\left(\sqrt{\frac{2}{c}}\right)\,.
$$
It is obvious that $w_c(t)$ is continuous on $(c_0, +\infty)$ for every $t$ in $[0,T_c)$, so that $F$ is continuous. Fix $c_0<c_1<c_2$. Recalling that 
$$
T_c\,=\,\sqrt\frac{\pi\,c}{2}\frac{\Gamma\left(\frac{1}{2}+\frac{1}{n-1}\right)}{\Gamma\left(\frac{n}{n-1}\right)},
$$
we deduce $\dis T_{c_1}<T_{c_2}$. Moreover we state that $\dis w_{c_1}(t)<w_{c_2}(t)$ for every $t$ in $\dis (0,T_{c_1}]$. Indeed, since $-w''_{c_1}(t)>-w''_{c_2}(t)$ near $t=0$ and using the initial conditions, we obtain that $w_{c_1}(t)<w_{c_2}(t)$ near $t=0$. If, by contradiction, there exists $s$ in $(0,T_{c_1})$ such that $w_{c_1}(s)=w_{c_2}(s)$ we have that $w'_{c_1}(s)\geq w'_{c_2}(s)$. We know, by \eqref{oned02}, that 
$$
w_{c_1}'(s)=-\sqrt{\frac{2}{(n-1)^2c_1}}\,(w_{c_1}^{1-n}(s)-1)^{\frac{1}{2}}\,<\,-\sqrt{\frac{2}{(n-1)^2c_2}}\,(w_{c_1}^{1-n}(s)-1)^{\frac{1}{2}}=w_{c_2}'(s)\,,
$$
that is a contradiction. Hence we have that $w_c(t)$ is monotone increasing in $c$. This implies that $F$ also is monotone increasing in $c$. By letting $c$ tend to the boundary of $(c_0,+\infty)$ and recalling that
$$
\lim_{c\to c_0} w_c^{n+1}(1)\,=0\,\quad\text{and}\quad\lim_{c\to+\infty} w_c^{n+1}(1)\,=\,1\,,
$$
we deduce 
$$
\lim_{c\to c_0} F(c)\,=\,-\frac{2}{c_0(n-1)^2}\,S_{c_0}^{-1}\left(\sqrt{\frac{2}{c_0}}\right)\,<\,0 \quad\text{and}\quad\lim_{c\to+\infty} F(c)\,=\,1.
$$
Applying Bolzano's theorem we obtain that there exists $\tilde{c}$ such that $F(\tilde{c})=0$, that is \eqref{sucn2}. Since $F$ is monotone increasing, $\tilde{c}$ is unique.
\end{proof}

\section{Open problems}
\label{s7}

We are now studying the nonexistence of positive solutions of \eqref{pbv4} in the $N$-dimensional case with $f$ only {\sl nonnegative}. More precisely we assume that $f$ is a nonnegative $\elle\infty$ function and that there exists $\omega \subset\subset \Omega$ such that $f = 0$ in $\Omega \setminus \omega$, and such that for every $\omega' \subset \subset \omega$ there exists $c_{\omega'} > 0$ such that $f \geq c_{\omega'}$ in $\omega'$. \\
We observe that from Remark \ref{recov} it follows that $u$, given by Theorem \ref{main4}, is a classical solution of
$$
\begin{cases}
-u''= 0 & \mbox {in $(-2,-1)\cup(1,2)$}\,, \\
u(\pm 1)=1\,, \\
u(\pm 2)=0\,.
\end{cases}
$$
Our conjecture is that it is true also for $N>1$. More precisely we think that the following result holds.
\begin{conj}
Let $u$ be the function given by Theorem \ref{main1}, with $M(x)\equiv I$. Then $u$ is a classical solution of 
$$
\begin{cases}
-\Delta u = 0 & \mbox{in $\Omega \setminus \overline{\omega}$,} \\
\hfill u = 1 \hfill & \mbox{on $\partial\omega$,} \\
\hfill u = 0 \hfill & \mbox{on $\partial\Omega$.}
\end{cases}
$$
\end{conj}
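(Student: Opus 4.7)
The plan is to split the analysis into an interior regularity step and a boundary identification step. By Theorem~\ref{main1} we have $-\Delta u=\mu$ in $\mathcal{D}'(\Omega)$ with $\mu$ concentrated on $\partial\omega$, so $u$ is distributionally harmonic on the open set $\Omega\setminus\partial\omega$ and, by Weyl's lemma, admits a $C^\infty$ representative there. Since $u\equiv 1$ almost everywhere in $\omega$ by the same theorem, continuity of this representative forces $u\equiv 1$ pointwise throughout $\omega$; in particular $u$ is classically harmonic in $\Omega\setminus\overline{\omega}$.

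Next, I would exploit the fact that $u\in W^{1,2}_0(\Omega)$: its trace vanishes on $\partial\Omega$ in the $W^{1/2,2}$ sense, while its trace on $\partial\omega$ from the interior of $\omega$ equals $1$, so the external trace agrees with it almost everywhere on $\partial\omega$ (as $u$ is a single element of $W^{1,2}(\Omega)$). Hence $u\big|_{\Omega\setminus\overline{\omega}}$ must coincide with the unique weak solution of
\begin{equation*}
\begin{cases}
-\Delta w = 0 & \text{in } \Omega\setminus\overline{\omega}, \\
w = 1 & \text{on } \partial\omega, \\
w = 0 & \text{on } \partial\Omega,
\end{cases}
\end{equation*}
and, under mild regularity assumptions on the two boundaries (for instance Lipschitz), standard elliptic boundary regularity upgrades this weak solution to a classical one attaining the constant boundary data pointwise.

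The main obstacle is precisely this final step when no regularity is assumed on $\omega$ or $\Omega$: the boundary values are then attained only in a capacitary sense and a barrier construction is required at each irregular point of $\partial\omega$. A more robust route would be to transfer the approximation-level estimates directly to the limit. The upper bound \eqref{stimalinfty} yields $\limsup_n u_n\leq 1$ uniformly in $\Omega$, the lower bound \eqref{frombelow} gives $\liminf_n u_n\geq 1$ on every $\omega'\subset\subset\omega$, and the Green's function representation
$$
u_n(x)=\io G(x,y)\,\frac{f(y)}{u_n^n(y)}\,dy
$$
from the proof of Theorem~\ref{main1}, together with the uniform $L^1$-bound on $f/u_n^n$ obtained there, allow one to control $u_n$ pointwise in $\Omega\setminus\overline{\omega}$. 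Combining these ingredients with an exterior barrier at each Wiener-regular point of $\partial\omega$ should yield $u_n\to u$ locally uniformly in $\Omega\setminus\partial\omega$ and continuity of $u$ across $\partial\omega$ with boundary value $1$, establishing the conjecture.
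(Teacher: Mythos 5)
This statement appears in Section~7 (``Open problems'') as a conjecture; the paper offers no proof of it, so there is nothing to compare your argument against. Your outline follows the natural decomposition -- interior regularity via Weyl's lemma plus a boundary identification -- and the interior step is sound: since $\mu$ is concentrated on $\partial\omega$, the function $u$ is distributionally (hence, by Weyl, classically) harmonic on the open set $\Omega\setminus\partial\omega$, and together with $u\equiv 1$ almost everywhere in $\omega$ this forces $u\equiv 1$ pointwise in $\omega$ and $-\Delta u=0$ classically in $\Omega\setminus\overline{\omega}$.

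The genuine gap, which you flag but do not close, is the boundary identification. Theorem~\ref{main1} places no regularity hypothesis on $\partial\omega$ or $\partial\Omega$: $\omega$ is only assumed compactly contained, and $\Omega$ only open and bounded. Without Lipschitz (or at least Wiener-regular) boundaries, the trace argument -- that the two one-sided $W^{1,2}$ traces on $\partial\omega$ coincide, so the exterior trace equals $1$ -- is not even well posed, and even granting a weak identification of the boundary data, the upgrade from the variational solution to a \emph{classical} one that attains $u=1$ on $\partial\omega$ and $u=0$ on $\partial\Omega$ pointwise fails at irregular boundary points. Your fallback route (the Green's function estimate \eqref{dalsw}, the uniform $L^1$ bound on $f/\un^n$, the $L^\infty$ bounds \eqref{stimalinfty}--\eqref{frombelow}, barriers at Wiener-regular points) lists plausible ingredients, but it is a program rather than a proof: the pointwise control of $\un$ in $\Omega\setminus\overline{\omega}$ is only asserted, the barrier construction is not carried out, and it is not shown that locally uniform convergence of $\un$ propagates up to $\partial\omega$. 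In sum, your argument proves the conjecture under an extra regularity hypothesis on both $\partial\omega$ and $\partial\Omega$, which is strictly less than what the conjecture, as stated, claims; this is consistent with the authors leaving it open.
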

With a similar idea we think that Theorem \ref{main4} holds for $N>1$.
\begin{conj}
\label{conge1}
Let $u_n$ be the solution of \eqref{pbn} given by Theorem \ref{daBO}, with $M(x)\equiv I$. Let $\dis\left\{ v_n = \frac{u_n^{n+1}}{n+1}\right\}$ be the sequence of solutions of \eqref{parn}. Then $\{v_n\}$ is bounded in $\w\cap \linf$, so that it converges, up to subsequences, to a bounded nonnegative function $v$. Moreover $v$ is a weak solution of 
$$
\begin{cases}
\dis -\Delta v +\frac{|\nabla v|^{2}}{v} = f & \mbox{in $\omega$,} \\
\hfill v = 0 \hfill & \mbox{on $\partial\omega$,}
\end{cases}
$$
and $v\equiv 0$ in $\Omega\setminus\omega$.
\end{conj}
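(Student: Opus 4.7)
The plan is to parallel the proof of Theorem~\ref{main3}, adding the key observation that each $u_n$ is harmonic in $\Omega\setminus\overline\omega$ (since $f\equiv 0$ there) and that its weak limit, described by Theorem~\ref{main1}, is strictly below $1$ on every compact subset of $\Omega\setminus\overline\omega$. First, Theorem~\ref{otherprop}(iii) gives $\|v_n\|_{\linf}\leq C\|f\|_{\linf}$. Testing \eqref{parn} with $v_n$, dropping the positive quadratic term and applying Sobolev's inequality (exactly as in Remark~\ref{vnbuo}), one obtains that $\{v_n\}$ is bounded in $\w$. Up to subsequences, $v_n\rightharpoonup v$ in $\w$, weakly-$*$ in $\linf$, strongly in $L^q(\Omega)$ for every $q<+\infty$, and a.e.\ in $\Omega$.

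Next, to prove that $v\equiv 0$ in $\Omega\setminus\overline\omega$, I would exploit harmonicity of $u_n$ there. By Theorem~\ref{main1}, $u_n\rightharpoonup u$ weakly in $\w$, where $u$ is the duality solution of \eqref{pblim0} with $\mu$ concentrated on $\partial\omega$; in particular, $u$ is harmonic in $\Omega\setminus\overline\omega$, vanishes on $\partial\Omega$, and satisfies $0\leq u\leq 1$. The uniform $\linf$-bound on $u_n$ together with equicontinuity of harmonic functions (Arzel\`a--Ascoli) upgrades the weak convergence to uniform convergence $u_n\to u$ on every compact $K\subset\Omega\setminus\overline\omega$. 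The strong maximum principle forces $u<1$ on $K$, so for $n$ large $\sup_K u_n\leq 1-\delta_K<1$, and therefore $v_n=u_n^{n+1}/(n+1)\to 0$ uniformly on $K$. Exhausting $\Omega\setminus\overline\omega$ by such compacts, $v\equiv 0$ a.e.\ in $\Omega\setminus\overline\omega$; since $v\in\w$ vanishes outside $\omega$, under mild regularity of $\partial\omega$ the restriction $v|_\omega$ belongs to $W^{1,2}_0(\omega)$, which delivers the boundary condition on $\partial\omega$.

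The equation on $\omega$ follows by reproducing the strong-convergence argument of Theorem~\ref{main3} locally. Indeed, for every $\omega'\subset\subset\omega$ one has $f\geq c_{\omega'}>0$ in $\omega'$, so Theorem~\ref{dasotto} provides the lower bound \eqref{sottov} for $v_n$ on $\omega'$. I would then test \eqref{parn} with $\phi_\lambda(v_n-v)\varphi$, for nonnegative $\varphi\in C^1_c(\omega)$ and $\lambda\geq\mathrm{e}^{2M_{\supp\varphi}}$, and reproduce lines \eqref{pos4}--\eqref{pos7} to get strong convergence $v_n\to v$ in $W^{1,2}_{\text{loc}}(\omega)$. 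Passing to the limit in \eqref{parn} with test functions in $\w\cap\linf$ compactly supported in $\omega$, and then removing the compact-support restriction via the $\varphi_m\wedge\varphi$ and $T_\varepsilon(v)/\varepsilon$ Fatou/truncation scheme used after \eqref{pos8} in the proof of Theorem~\ref{main3}, yields both $|\nabla v|^2/v\in L^1(\omega)$ and the weak formulation for every test function in $W^{1,2}_0(\omega)\cap \linf$.

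The main obstacle lies in the rigorous treatment of what happens near $\partial\omega$. From the exterior side, the harmonic limit $u$ attains the value $1$ on $\partial\omega$, so the uniform estimate $u_n\leq 1-\delta_K$ degenerates as $K$ invades $\partial\omega$, and verifying $v\equiv 0$ a.e.\ on $\Omega\setminus\overline\omega$ uniformly up to the boundary requires either a careful exhaustion combined with the $\linf$-bound, or, in the absence of regularity of $\partial\omega$, a capacitary argument. From the interior side, the lower bound $v_n\geq\mathrm{e}^{-M_{\omega'}}$ deteriorates as $\omega'\nearrow\omega$, and extending the weak formulation to test functions whose support touches $\partial\omega$ needs a delicate truncation. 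Together with the need for enough regularity of $\partial\omega$ to ensure that a function in $\w$ vanishing a.e.\ on $\Omega\setminus\omega$ actually belongs to $W^{1,2}_0(\omega)$, these are the technical cruxes that the conjecture leaves open.
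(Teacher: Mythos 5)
This statement is posed in Section \ref{s7} of the paper as an open conjecture; the paper offers no proof, so there is no argument of record to compare yours against. Your outline is a sensible research plan that faithfully parallels Theorem \ref{main3}: the boundedness of $\{v_n\}$ in $\w\cap\linf$ and the extraction of a weak limit $v$ follow exactly as in Remark \ref{vnbuo}; the lower bound \eqref{sottov} is available on every $\omega'\subset\subset\omega$, so the $\phi_\lambda(v_n-v)\varphi$ argument does yield strong convergence in $W^{1,2}_{{\rm loc}}(\omega)$ and passage to the limit against $\varphi\in C^1_c(\omega)$; and your closing inventory of obstacles (degeneration of both the interior bound $v_n\geq \mathrm{e}^{-M_{\omega'}}$ and the exterior bound $u_n\leq 1-\delta_K$ as the compacts invade $\partial\omega$, plus the regularity of $\partial\omega$ needed to pass from $v\in\w$, $v=0$ a.e.\ outside $\omega$, to $v|_\omega\in W^{1,2}_0(\omega)$) is precisely the set of problems the authors implicitly leave open by calling this a conjecture.

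There is one further gap that you do not flag. The step ``$u<1$ on every compact $K\subset\Omega\setminus\overline\omega$, hence $v_n=u_n^{n+1}/(n+1)\to 0$ uniformly on $K$'' relies on the strong maximum principle, and that in turn requires the connected component of $\Omega\setminus\overline\omega$ containing $K$ to reach $\partial\Omega$, where $u=0$. If $\omega$ is not simply connected, $\Omega\setminus\overline\omega$ can contain a component $A\subset\subset\Omega$ bounded entirely by $\partial\omega$; there $u$ is harmonic with boundary value $1$ (since $u\equiv 1$ in $\omega$), so $u\equiv 1$ on $A$, and $u_n\to 1$ locally uniformly on $A$ with no control on the rate. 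The a priori estimate $\norma{u_n}{\elle\infty}\leq (C(n+1)\norma{f}{\elle\infty})^{1/(n+1)}$ only gives $v_n\leq C\norma{f}{\elle\infty}$ on $A$, so the conclusion $v\equiv 0$ on $A$ cannot be extracted from ``$u<1$'' and would require a genuinely different, quantitative argument for the source-free equation $-\Delta v_n+\frac{n}{n+1}\frac{|\nabla v_n|^2}{v_n}=0$ on $A$. Either the conjecture tacitly excludes such a topology for $\omega$ (as in the one-dimensional situation of Theorem \ref{main4}, where both exterior components meet $\partial\Omega$), or this configuration must be handled separately; as written, your plan does not cover it.
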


\end{document}